\newcommand*{\mailto}[1]{\href{mailto:#1}{\nolinkurl{#1}}}
\def\theequation{\@arabic\c@equation}
\newcommand{\bbN}{{\mathbb{N}}}
\newcommand{\bbR}{{\mathbb{R}}}
\newcommand{\bbZ}{{\mathbb{Z}}}
\newcommand{\bbC}{{\mathbb{C}}}
\newcommand{\cC}{{\mathcal C}}
\newcommand{\no}{\nonumber}
\newcommand{\lb}{\label}
\newcommand{\bi}{\bibitem}
\newcommand{\f}{\frac}
\newcommand{\dott}{\,\cdot\,}
\newcommand{\Oh}{O}
\renewcommand{\Re}{\operatorname{Re}}
\renewcommand{\Im}{\operatorname{Im}}
\renewcommand{\ln}{\operatorname{ln}}
\numberwithin{equation}{section}
\newtheorem{theorem}{Theorem}[section]
\newtheorem{proposition}[theorem]{Proposition}
\newtheorem{example}[theorem]{Example}
\theoremstyle{definition}
\newtheorem{remark}[theorem]{Remark}
\begin{document}
\title[Meijer's $G$-Function and Euler's Differential Equation]{Meijer's $G$-Function and Euler's Differential Equation Revisited}

\author{Fritz Gesztesy}
\address{Department of Mathematics,
Baylor University, Sid Richardson Bldg., 1410 S.\,4th Street,
Waco, TX 76706, USA}
\email{\mailto{Fritz\_Gesztesy@baylor.edu}}
\urladdr{\url{https://math.artsandsciences.baylor.edu/person/fritz-gesztesy-phd}}

\author{Markus Hunziker}
\address{Department of Mathematics,
Baylor University, Sid Richardson Bldg., 1410 S.\,4th Street,
Waco, TX 76706, USA}
\email{\mailto{Markus\_Hunziker@baylor.edu}}
\urladdr{\url{https://math.artsandsciences.baylor.edu/person/markus-hunziker-phd}}

\date{\today}
\subjclass[2010]{Primary: 34A05, 34B30, 34A30, 34M03; Secondary: 34A30, 34E05, 34L10.}
\keywords{Euler's differential equation, Meijer's $G$-function, generalized hypergeometric functions, strongly singular coefficients.}

\begin{abstract}
We consider the generalized eigenvalue problem for the classical Euler differential equation and demonstrate its intimate connection with Meijer's $G$-functions. 

In the course of deriving the solution of the generalized Euler eigenvalue equation we review some of the basics of generalized hypergeometric functions and Meijer's $G$-functions and some of its special cases where the underlying Mellin-type integrand exhibits higher-order poles. 
\end{abstract}

\maketitle

{\scriptsize{\tableofcontents}}
\normalsize

\section{Introduction} \lb{s1}
 
While Euler's $N$th-order homogeneous differential equation, 
\begin{equation}
\Bigg(z^{-N} \prod_{j=1}^N \bigg[z\f{d}{dz} - \lambda_j\bigg]\Bigg) y(z) =0, \quad 
\lambda_j \in \bbC, \, 1 \leq j \leq N, \; z \in \bbC\backslash\{0\},     \lb{1.1} 
\end{equation}
and its nonhomogeneous version,
\begin{equation}
\Bigg(z^{-N} \prod_{j=1}^N \bigg[z\f{d}{dz} - \lambda_j\bigg]\Bigg) y(z) = f(z), \quad 
\lambda_j \in \bbC, \, 1 \leq j \leq N, \; z \in \bbC\backslash\{0\},      \lb{1.2} 
\end{equation}
has received enormous attention in the ODE literature, see, for instance, \cite[p.~122--123, 131, ]{CL85}, \cite[p.~85]{Ha82}, \cite[p.343--344]{Hi97}, \cite[p.~141--143, 202]{In56}, \cite[p.~26--27, 53--54]{Po36}, \cite[p.~116--117]{Te12}, \cite[p.~208--209]{Wa98}, and our observations in Section \ref{s5}, the associated generalized eigenvalue differential equation, 
\begin{equation}
\Bigg(z^{-N} \prod_{j=1}^N \bigg[z\f{d}{dz} - \lambda_j\bigg]\Bigg) y(\mu,z)= \mu y(\mu,z), \quad \lambda_j \in \bbC, \, 1 \leq j \leq N, \; z \in \bbC\backslash\{0\},   \lb{1.3} 
\end{equation}
with $\mu \in \bbC$ the spectral parameter, seems to have escaped scrutiny, at least, we were not able to find pertinent references. 

This paper aims at filling this gap and demonstrate that the solutions of \eqref{1.3} can be expressed in terms of Meijer's $G$-function, and, generically in terms of generalized hypergeometric functions (see Theorems \ref{t4.1} and \ref{t4.3}).  

In Section \ref{s2} we recall some of the basic facts on generalized hypergeometric functions and Meijer's $G$-functions, relevant in our context and, in the generic case of the parameters involved (see \eqref{2.5} and \eqref{2.15},  describe a fundamental system of solutions for the generalized hypergeometric differential equation \eqref{2.4} and a fundamental system of the Meijer $G$-function differential equation \eqref{2.10}.

The case of fundamental systems in the general (non-generic) case for Meijer's $G$-function $G^{m,n}_{p,q}$ is incredibly complex and beyond the scope of this note. Instead, in Section \ref{s3}, we only discuss the special situation $G^{m,0}_{0,q}$, a case that is tailored precisely to the generalized Euler eigenvalue problem at hand. In particular, we permit higher order poles in the integrand in the Mellin-type integral for $G^{m,0}_{0,q}$. We have not been able to locate this result, even in the present special case considered, in the literature. 

The results of Section \ref{s3} are then applied in our principal Section \ref{s4} to derive the solutions of \eqref{1.3}. 

Finally, in Section \ref{s5} we add some remarks on the nonhomogeneous $N$th-order Euler differential equation \eqref{1.2}.

\section{Basics on Generalized Hypergeometric Functions and Meijer's $G$-Functions} \lb{s2}

In this preparatory section we recall some of the basic properties of generalized hypergeometric functions and Meijer's $G$-functions. For details on generalized hypergeometric functions and Meijer's $G$-functions we refer, for instance, to \cite{BS13}, \cite[Ch.~IV, Sects.~5.3--5.6]{EMOT53}, \cite[Ch.~V]{Lu69}, \cite[Ch.~V]{Lu75}, \cite[Chs.~I, II, IV]{MS73}, \cite[Ch.~16]{OLBC10}, and \cite[Sect.~8.2]{PBM90}.

We start with generalized hypergeometric functions. Introducing\footnote{We employ the convenient abbreviation $\bbN_0= \bbN \cup \{0\}$.}
\begin{align}
\begin{split} 
& {}_r F_s\left(\!\begin{array}{c} {\displaystyle \alpha_1,\dots,\alpha_r} \\
{\displaystyle \beta_1,\dots,\beta_s}\end{array} \bigg\vert\, \zeta \right) = \sum_{k \in \bbN_0} 
\f{\prod_{\ell=1}^r (\alpha_{\ell})_k}{\prod_{m=1}^s (\beta_{m})_k} \, \f{\zeta^k}{k!}, \quad \zeta \in \bbC, \\
& r, s \in \bbN_0, \; r \leq s, \; \alpha \in \bbC, \, 1 \leq \ell \leq r, \; \beta_m \in \bbC \backslash \{- \bbN_0\}, \, 1 \leq m \leq s,   \lb{2.1} 
\end{split} 
\end{align}
with
\begin{equation}
(\gamma)_0 = 1, \; (\gamma)_k = \gamma (\gamma + 1) \cdots (\gamma + k - 1) 
= \Gamma(\gamma + k)/\Gamma(\gamma), \; k \in \bbN, \; \gamma \in \bbC,      \lb{2.2} 
\end{equation}
denoting Pochhammer's symbol. 

Then ${}_r F_s\Big(\!\begin{array}{c} {\scriptstyle \alpha_1,\dots,\alpha_r} \\
{\scriptstyle \beta_1,\dots,\beta_s}\end{array} \Big\vert\, \cdot \Big)$, $r \leq s$, is entire, 
\begin{equation}
{}_r F_s\left(\!\begin{array}{c} {\displaystyle \alpha_1,\dots,\alpha_r} \\
{\displaystyle \beta_1,\dots,\beta_s}\end{array} \bigg\vert\, \zeta \right) \underset{\zeta \to 0}{=} 
1 +\Oh(\zeta),      \lb{2.3}
\end{equation}
and it satisfies the differential equation of order $s+1$ (see  \cite[p.~184]{EMOT53}, \cite[p.~136--137]{Lu69}, \cite[p.~190]{Lu75}), 
\begin{equation}
\Bigg[\zeta \f{d}{d\zeta} \prod_{m=1}^s \bigg(\zeta \f{d}{d\zeta} + \beta_m - 1\bigg)  
- \zeta \prod_{\ell=1}^r \bigg(\zeta \f{d}{d\zeta} + \alpha_{\ell}\bigg)\Bigg] u(\zeta) = 0, \quad \zeta \in \bbC\backslash\{0\}. 
\lb{2.4} 
\end{equation}
Since $r \leq s$, $\zeta = 0$ is a regular singularity and $\zeta = \infty$ is an irregular (i.e., essential) singularity  of \eqref{2.4}; there are no other singularities of \eqref{2.4}. 

If one strengthens the conditions on $\alpha_{\ell}$, $1 \leq \ell \leq r$, $\beta_m$, $1 \leq m \leq s$, in \eqref{2.1} to
\begin{align} 
\begin{split} 
& r, s \in \bbN_0, \; r \leq s, \; \alpha_{\ell} \in \bbC, \, 1 \leq \ell \leq r, \; \beta_m \in \bbC \backslash \{- \bbN_0\}, \, 1 \leq m \leq s,    \\
& \beta_m - \beta_{m'} \in \bbC \backslash \bbZ, \quad 1 \leq m, m' \leq s, \; m \neq m',     \lb{2.5} 
\end{split} 
\end{align}
the following $s+1$ functions 
\begin{align}
& u_0(\zeta) = {}_r F_s\left(\!\begin{array}{c} {\displaystyle \alpha_1,\dots,\alpha_r} \\
{\displaystyle \beta_1,\dots,\beta_s}\end{array} \bigg\vert\, \zeta \right),   \no \\
& u_m(\zeta) = \zeta^{1-\beta_m}      \lb{2.6} \\
& \quad \times {}_r F_s\left(\!\begin{array}{c} {\displaystyle 1+\alpha_1-\beta_m,\dots\dots\dots\dots\dots
\dots. \, .\dots\dots\dots\dots\dots\dots,1+\alpha_r-\beta_m} \\
{\displaystyle 1+\beta_1-\beta_m,\dots,1+\beta_{m-1}-\beta_m,1+\beta_{m+1}-\beta_m,\dots,1+\beta_s-\beta_m}\end{array} \bigg\vert\, \zeta \right),    \no \\
& \hspace*{10cm} 1 \leq m \leq s; \; \zeta \in \bbC\backslash\{0\},    \no
\end{align}
constitute a fundamental system of solutions of \eqref{2.4} (see, \cite[p.~137]{Lu69}, \cite[p.~190]{Lu75}, and \cite[p.~409]{OLBC10}). In general, the solutions $u_m$, $ 1 \leq m \leq s$, in \eqref{2.5} acquire the branch cut inherited from $\zeta \mapsto \zeta^{1-\beta_m}$.

The setup in \eqref{2.4}--\eqref{2.6} formally corresponds to introducing $\beta_0 = 1$ in the first term of \eqref{2.4}. 

\begin{remark} \lb{r2.1}
$(i)$ Our notation in \eqref{2.6} (and later on) is somewhat defective: For instance, the case $m=1$ in \eqref{2.6} leads to nonexistent terms as the $\beta$-row would necessarily start with $1+\beta_2 - \beta_1$. But we prefer this ambiguity over introducing various case distinctions, unnecessarily complicating the presentation. \\[1mm] 
$(ii)$ The restriction $\beta_m \in \bbC \backslash \{0\}$, $1 \leq m \leq s$, in \eqref{2.1} is necessitated by the fact that ${}_r F_s\Big(\!\begin{array}{c} {\scriptstyle \alpha_1,\dots,\alpha_r} \\
{\scriptstyle \beta_1,\dots,\beta_s}\end{array} \Big\vert\, \cdot \Big)$ exhibits a first order pole as $\beta_m \to -n$, $n \in \bbN_0$. This yields a simple recipe to define a closely related function, ${}_r {\bf F}_s\Big(\!\begin{array}{c} {\scriptstyle \alpha_1,\dots,\alpha_r} \\ {\scriptstyle \beta_1,\dots,\beta_s}\end{array} \Big\vert\, \cdot \Big)$, which is well-defined for all 
$\beta_m \in \bbC$, $1 \leq m \leq s$, viz.,
\begin{align}
{}_r {\bf F}_s\Big(\!\begin{array}{c} {\scriptstyle \alpha_1,\dots,\alpha_r} \\ {\scriptstyle \beta_1,\dots,\beta_s}\end{array} \Big\vert\, \cdot \Big) &= {}_r F_s\Big(\!\begin{array}{c} {\scriptstyle \alpha_1,\dots,\alpha_r} \\ {\scriptstyle \beta_1,\dots,\beta_s}\end{array} \Big\vert\, \cdot \Big) \bigg/ \prod_{m=1}^s \Gamma(\beta_m)      \no \\
&= \sum_{k \in \bbN_0} 
\f{\prod_{\ell=1}^r (\alpha_{\ell})_k}{\prod_{m=1}^s \Gamma(\beta_{m} + k)} \, \f{\zeta^k}{k!}, \quad \zeta \in \bbC, \\
& \hspace*{-2.15cm}
r, s \in \bbN_0, \; r \leq s, \; \alpha, \, 1 \leq \ell \leq r, \; \beta_m \in \bbC, \, 1 \leq m \leq s,   \no
\end{align}
a device used, for instance, in \cite[Sect.~5.9]{Ol97} and \cite[Nos.~ 15.2.2, 16.2.5]{OLBC10}. 
\hfill $\diamond$
\end{remark}

For additional background information in connection with generalized hypergeometric functions  we refer, for instance, to the monographs, \cite[Ch.~11]{An98}, \cite[Ch.~II]{Ba64}, \cite[Ch.~IV]{EMOT53}, \cite[Chs.~V--VII]{Lu69}, \cite[Ch.~IX]{Lu69a}, \cite[Ch.~V, VIII]{Lu75}, \cite[Sect.~2.3]{MS73}, \cite[Ch.~16]{OLBC10}, \cite[Sect.~7.2]{PBM90}, \cite[Ch.~2]{Sl66}.  

Next, we turn to some of the basics of Meijer's $G$-functions and hence introduce
\begin{align}
& G_{p,q}^{m,n}\left(\!\begin{array}{c} {\displaystyle a_1,\dots,a_n; a_{n+1},\dots,a_p} \\
{\displaystyle b_1,\dots,b_m; b_{m+1},\dots,b_q}\end{array} \bigg\vert\, \zeta \right)     \no \\
& \quad = \f{1}{2\pi i} 
\int_{\cC_{b_1,\dots,b_m}} d \omega \, \f{\prod_{j=1}^m \Gamma(b_j-\omega) \prod_{k=1}^n \Gamma(1-a_k+\omega)}
{\prod_{j=m+1}^q \Gamma(1-b_j+\omega) \prod_{k=n+1}^p \Gamma(a_k-\omega)} \, \zeta^{\omega},   
\lb{2.7} \\
& \hspace*{3cm} 0 \leq n \leq p, \; 0 \leq m \leq q, \; q \geq 1, \; p < q, \; \zeta \in \bbC\backslash\{0\},    \no
\end{align} 
assuming that no pole of $\Gamma(b_j - \cdot)$, $1 \leq j \leq m$, coincides with a pole of 
$\Gamma(1-a_k+\cdot)$, $1 \leq k \leq n$, thus,
\begin{equation}
a_k - b_j \in \bbC \backslash \bbN, \quad 1 \leq j \leq m, \; 1 \leq k \leq n.      \lb{2.8} 
\end{equation}
Here 
\begin{align}
& \text{$\cC_{b_1,\dots,b_m}$ is a contour beginning and ending at $+ \infty$,} \no \\
& \quad \text{encircling all poles of 
$\Gamma(b_j-\cdot)$, $1 \leq j \leq m$, once in negative orientation,}  \lb{2.8a} \\
& \quad \text{but none of the poles of 
$\Gamma(1-a_k+\cdot)$, $1 \leq k \leq n$,}   \no 
\end{align} 
and the left-hand side of \eqref{2.7} is defined as the (absolutely convergent) sum of residues of the right-hand side. 

As a result, \eqref {2.7} is trivial if $m=0$,
\begin{equation}
G_{p,q}^{0,n}\left(\!\begin{array}{c} {\displaystyle a_1, \dots,a_n;a_{n+1},\dots ,a_p} \\
{\displaystyle ; b_1, . \dots \dots \dots \dots \dots ,b_q}\end{array} \bigg\vert\, \dott \right) = 0.      \lb{2.9}
\end{equation}

One verifies that $G_{p,q}^{m,n}\Big(\!\begin{array}{c} {\scriptstyle a_1,\dots,a_n; a_{n+1},\dots,a_p} \\
{\scriptstyle b_1,\dots,b_m; b_{m+1}, \dots, b_q}\end{array} \Big\vert\, \cdot \Big)$, $p < q$, satisfies the differential equation of order $q$ (see  \cite[p.~210]{EMOT53}, \cite[p.~181]{Lu69}, \cite[p.~192]{Lu75}, \cite[p.~13, 16]{MS73}),
\begin{equation}
\Bigg[\prod_{j=1}^q \bigg(\zeta \f{d}{d\zeta} - b_j\bigg) 
- (-1)^{m+n-p}\zeta \prod_{k=1}^p \bigg(\zeta \f{d}{d\zeta} - a_k + 1\bigg)\Bigg] u(\zeta) = 0, 
\quad \zeta \in \bbC\backslash\{0\}.       \lb{2.10} 
\end{equation}
Again, since $p < q$, $\zeta = 0$ is a regular singularity and $\zeta = \infty$ is an irregular (i.e., essential) singularity  of \eqref{2.10}; there are no other singularities of \eqref{2.10}. 

Next, if 
\begin{equation}
b_j - b_{j'} \in \bbC \backslash \bbZ, \quad 1 \leq j, j' \leq m, \; j \neq j',    \lb{2.11} 
\end{equation}
then (cf.\ \cite[p.~208]{EMOT53}, \cite[p.~145]{Lu69}, \cite[p.~171]{Lu75}, \cite[p.~3--4]{MS73})
\begin{align}
& G_{p,q}^{m,n}\left(\!\begin{array}{c} {\displaystyle a_1,\dots,a_n; a_{n+1},\dots,a_p} \\
{\displaystyle b_1,\dots,b_m; b_{m+1},\dots,b_q}\end{array} \bigg\vert\, \zeta \right)    \no \\
& \quad = \sum_{j=1}^m \f{\prod_{j'=1 \, j' \neq j}^m \Gamma(b_{j'} -b_j) \prod_{k=1}^n \Gamma(1+b_j-a_k)}{\prod_{j' = m+1}^q\Gamma(1+b_j-b_{j'}) \prod_{k=n+1}^p \Gamma(a_k-b_j)} \, \zeta^{b_j}    \lb{2.12} \\
& \qquad \times {}_p F_{q-1}\bigg(\!\begin{array}{c} {\displaystyle 1+b_j-a_1,\dots\dots\dots\dots\dots\,.\,
\,.\,\,.\,\dots\dots\dots\dots\dots,1+b_j-a_p} \\
{\displaystyle 1+b_j-b_1,\dots,1+b_{j}-b_{j-1},1+b_j-b_{j+1},\dots,1+b_j-b_q}\end{array} \bigg\vert   \no \\
& \hspace*{10.7cm} \bigg\vert\, (-1)^{p-m-n} \zeta \bigg),    \no \\ 
& \hspace*{5.25cm} 0 \leq n \leq p, \; 0 \leq m \leq q, \; q \geq 1, \; p < q, \; \zeta \in \bbC\backslash\{0\}.    \no 
\end{align}
In particular, if $m=1$ (and hence \eqref{2.11} becomes irrelevant), then (cf.\ \cite[p.~174]{Lu75}, \cite[p.~4]{MS73})
\begin{align}
& G_{p,q}^{1,n}\left(\!\begin{array}{c} {\displaystyle a_1, . . . \dots ,a_n; a_{n+1}, \dots . . . ,a_p} \\
{\displaystyle b_j;b_1,\dots,b_{j-1}, b_{j+1},\dots,b_q}\end{array} \bigg\vert\, \zeta \right)    \no \\
& \quad = \f{\prod_{k=1}^n \Gamma(1+b_j-a_k)}{\prod_{j' = 1}^q\Gamma(1+b_j-b_{j'}) \prod_{k=n+1}^p \Gamma(a_k-b_j)} \, \zeta^{b_j}     \lb{2.13} \\
& \qquad \times {}_p F_{q-1}\bigg(\!\begin{array}{c} {\displaystyle 1+b_j-a_1,\dots\dots\dots\dots\dots\,.\,
\,.\,\,.\,\dots\dots\dots\dots\dots,1+b_j-a_p} \\
{\displaystyle 1+b_j-b_1,\dots,1+b_{j}-b_{j-1},1+b_j-b_{j+1},\dots,1+b_j-b_q}\end{array} \bigg\vert   \no \\
& \hspace*{10.8cm} \bigg\vert\, (-1)^{p-1-n} \zeta \bigg),    \no \\ 
& \hspace*{6.55cm} 0 \leq n \leq p, \; 1 \leq j \leq q, \; p < q, \; \zeta \in \bbC\backslash\{0\},     \no 
\end{align}
and (still assuming \eqref{2.11}),
\begin{align}
& G_{p,q}^{m,n}\left(\!\begin{array}{c} {\displaystyle a_1,\dots,a_n; a_{n+1},\dots,a_p} \\
{\displaystyle b_1,\dots,b_m; b_{m+1},\dots,b_q}\end{array} \bigg\vert\, \zeta \right)    \no \\
& \quad = \sum_{j=1}^m \f{\prod_{j'=1 \, j' \neq j}^m \Gamma(b_{j'} -b_j) \Gamma(1+b_j-b_{j'})}{\prod_{k = n+1}^p \Gamma(1+b_j-a_{j'}) \Gamma(a_k-b_j)} \, e^{-i \pi b_j(p+1-m-n)}    \lb{2.14} \\
& \qquad \times G_{p,q}^{1,p}\left(\!\begin{array}{c} {\displaystyle a_1, . . \dots \dots \dots \dots \dots \dots . ,a_p;} \\
{\displaystyle b_j;b_1,\dots,b_{j-1}, b_{j+1},\dots,b_q}\end{array} \bigg\vert\, (-1)^{p+1-m-n} \zeta \right) ,    \no \\ 
& \hspace*{2.2cm} 0 \leq n \leq p, \; 0 \leq m \leq q, \; q \geq 1, \; p < q, \; \zeta \in \bbC\backslash\{0\}    \no 
\end{align}
(see, \cite[p.~175]{Lu75}, \cite[p.~14]{MS73}).

\begin{remark} 
Given Remark \ref{r2.1}\,$(ii)$, the presence of the factors $\prod_{j'=m+1}^q \Gamma(1+b_j-b_{j'})$, respectively, 
$\prod_{j'=1}^q \Gamma(1+b_j-b_{j'})$, in the denominators of the right-hand sides of \eqref{2.12}, respectively, \eqref{2.13}, renders these right-hand sides well-defined without imposing the additional condition 
$b_j - b_{j'} \in \bbC \backslash \bbZ$, $1\leq j \leq m$, $m+1 \leq j' \leq q$. 
\hfill $\diamond$
\end{remark} 

We also recall that if 
\begin{equation}
b_j - b_{j'} \in \bbC \backslash \bbZ, \quad 1 \leq j, j' \leq q, \; j \neq j',    \lb{2.15} 
\end{equation}
then (cf.\ \cite[p.~181]{Lu69}, \cite[p.~190]{Lu75}, \cite[p.~621--622]{PBM90})
\begin{align}
& G_{p,q}^{1,p}\left(\!\begin{array}{c} {\displaystyle a_1, . . \dots \dots \dots \dots \dots \dots . ,a_p;} \\
{\displaystyle b_j;b_1,\dots,b_{j-1}, b_{j+1},\dots,b_q}\end{array} \bigg\vert\, \zeta \right)    \no \\
& \quad = \f{\prod_{k=1}^p \Gamma(1+b_j-a_k)}{\prod_{j' = 1}^q\Gamma(1+b_j-b_{j'})} \, \zeta^{b_j}     \lb{2.16} \\
& \qquad \times {}_p F_{q-1}\bigg(\!\begin{array}{c} {\displaystyle 1+b_j-a_1,\dots\dots\dots\dots\dots\,.\,
\,.\,\,.\,\dots\dots\dots\dots\dots,1+b_j-a_p} \\
{\displaystyle 1+b_j-b_1,\dots,1+b_{j}-b_{j-1},1+b_j-b_{j+1},\dots,1+b_j-b_q}\end{array} \bigg\vert   \no \\
& \hspace*{12cm} \bigg\vert - \zeta \bigg),    \no \\ 
& \hspace*{8.4cm} 1 \leq j \leq q, \; p < q, \; \zeta \in \bbC\backslash\{0\},     \no 
\end{align}
constitutes a fundamental system of solutions of \eqref{2.10}, subject to \eqref{2.15}. Equivalently, taking into account the asymptotics \eqref{2.3}, 
\begin{align}
& \zeta^{b_j} {}_p F_{q-1}\bigg(\!\begin{array}{c} {\displaystyle 1+b_j-a_1,\dots\dots\dots\dots\dots\,.\,
\,.\,\,.\,\dots\dots\dots\dots\dots,1+b_j-a_p} \\
{\displaystyle 1+b_j-b_1,\dots,1+b_{j}-b_{j-1},1+b_j-b_{j+1},\dots,1+b_j-b_q}\end{array} \bigg\vert - \zeta \bigg),  \no \\
& \hspace*{7.2cm} 1 \leq j \leq q, \; p < q, \; \zeta \in \bbC\backslash\{0\},     \lb{2.17} 
\end{align}
represents a fundamental system of solutions of \eqref{2.10}, subject to \eqref{2.15} (see, \cite[p.~181]{Lu69}).

The case when condition \eqref{2.15} is violated is much more involved and we will study a special case, particularly suited for our application to Euler's differential equation, in the next section.

For additional results on Meijer's $G$-function see, for instance, \cite[Sect.~11.3]{An98}, 
\cite{BS13}, \cite{DKK17}, \cite[Ch.~V]{EMOT53}, \cite{Fi72}, \cite[Chs.~V, VI]{Lu69}, \cite[Ch.~IX]{Lu69a}, \cite[Ch.~V, VIII]{Lu75}, \cite{MS73}, \cite{Me36}--\cite{Me52}, \cite[Ch.~16]{OLBC10}, \cite[Sect.~8.2]{PBM90}, \cite{Ro82/83}, \cite[Ch.~2]{Sl66}, \cite{Wi22}, \cite{Wo01}.

\section{Meijer's $G$-Functions in the Presence of Higher-Order Poles} \lb{s3}

In this section we take a close look at a fundamental system of solutions of then differential equation \eqref{2.10} in the absence of condition \eqref{2.11} (surprisingly, we were not able to locate the details in the literature). In this case the integrand on the right-hand side of \eqref{2.7} exhibits higher-order poles and finding a fundamental set of solutions of \eqref{2.10} becomes a rather complex task.  

To analyze the right-hand side of \eqref{2.7} in the case the integrand has higher-order poles we start with some preparations: \\[1mm] 
${\bf (I)}$ Let $c \in \bbC$. Then $\Gamma(c - \dott)$ has the partial fraction expansion (see, e.g., \cite[p.~162]{Ti85})
\begin{equation}
\Gamma(c - \omega) = \underbrace{\int_1^{\infty} dx \, e^{-x} x^{c-1- \omega}}_{\text{entire w.r.t. $\omega$}}
+ \sum_{n \in \bbN_0} \f{(-1)^{n+1}}{n! (\omega - c -n)}, 
\quad \omega \in \bbC \backslash \{c + \bbN_0\}.      \lb{3.1}
\end{equation}
${\bf (II)}$ Let $b \in \bbC$. Then $\Gamma(b - \omega)$ has first order poles precisely at the points $\omega \in b + \bbN_0$ and is analytic otherwise on $\bbC$. The residues of $\Gamma(b - \omega)$ at $\omega \in b+\bbN_0$, are given by 
$(-1)^{n+1}/\Gamma(n+1)$, that is,
\begin{equation}
\lim_{\omega \to b+n} (\omega - b -n) \Gamma(b - \omega) = (-1)^{n+1}/(n!), \quad n \in \bbN_0.     \lb{3.2}
\end{equation}
(We use the usual convention $0! = 1$.) \\[1mm] 
${\bf (III)}$ Let $b \in \bbC$. Then $1/\Gamma(1 - b + \omega)$ is entire with respect to $\omega$ with simple zeros at $\omega \in b - \bbN$; there are no other zeros in $\bbC$. 

Consequently, the poles of $\Gamma(b_j - \dott)$ and zeros of $1/\Gamma(1 - b_{j'} + \dott)$, $1 \leq j, j' \leq q$, cannot interfere (and possibly cancel each other). \\[1mm] 
${\bf (IV)}$ The map $\bbC \ni \zeta \mapsto \zeta^{\omega} = e^{\omega \ln(\zeta)}$ has a cut in $\zeta$ (connecting $\zeta=0$ with infinity), but no cut in $\omega$. \\[1mm] 
${\bf (V)}$ Let $\Omega \subseteq \bbC$ be open and connected, $\omega_0 \in \Omega$, and $f : \Omega \backslash \{\omega_0\} \to \bbC$ analytic, with $\omega_0$ a pole of $f$ of order $k \in \bbN$. Then the residue of $f$ at $\omega_0$ is given by
\begin{equation}
{\rm Res\,}_{\omega_0} (f(\dott)) = \lim_{\omega \to \omega_0} \f{1}{(k-1)!} \f{d^{k-1}}{d\omega^{k-1}} \big[(\omega - \omega_0)^k f(\omega)\big].      \lb{3.3}
\end{equation}
${\bf (VI)}$ For subsequent use we also note the elementary fact,
\begin{equation}
\f{d^{k-1}}{d \omega^{k-1}} \zeta^{\omega} = \zeta^{\omega} [\ln(\zeta)]^{k-1}, \quad \zeta \in \bbC \backslash \{0\}, \; k \in \bbN.     \lb{3.3a}
\end{equation}

Next, going beyond condition \eqref{2.15} we decompose the multiset $\{b_1,\dots,b_q\}$ into congruence classes ${\rm mod}\,\bbZ$ as follows: 
\begin{align}
\begin{split} 
& \{b_1,\dots,b_q\} = \{\underbrace{b_1,\dots,b_{n_1}}_{\text{$n_1$ terms}}\} \cup 
\{\underbrace{b_{n_1+1},\dots,b_{n_1+n_2}}_{\text{$n_2$ terms}}\} \cup \cdots      \lb{3.4} \\
& \hspace*{2.5cm} \cdots \cup \{\underbrace{b_{n_1+\cdots + n_{r-1}+1},\dots,b_{n_1+\cdots +n_{r-1} + n_r} \equiv b_q}_{\text{$n_r$ terms}}\},  
\end{split} \\
& \sum_{s=1}^r n_s = q,    \lb{3.5} \\
\begin{split}
& \Re(b_1) \geq \cdots \geq \Re(b_{n_1}), \\
& \Re(b_{n_1+1}) \geq \cdots \geq \Re(b_{n_1+n_2}), \\
& \quad \vdots      \lb{3.6} \\
&  \Re(b_{n_1+\cdots n_{r-1}+1}) \geq \cdots \geq \Re(b_{n_1+\cdots n_{r-1}+n_r}), 
\end{split} \\
\begin{split} 
& b_j - b_{j'} \in \bbZ, \; 1 \leq j, j' \leq n_1, \\
& b_{n_1+j} - b_{n_1+j'} \in \bbZ, \; 1 \leq j, j' \leq n_2,  \\
& \quad \vdots    \lb{3.7} \\
& b_{n_1+\cdots+n_{r-1}+j} - b_{n_1+\cdot+n_{r-1}+j'} \in \bbZ, \; 1 \leq j, j' \leq n_r.
\end{split}
\end{align}

One notes that 
\begin{equation}
G_{0,q}^{m,0}\left(\!\begin{array}{c} {\displaystyle } \\
{\displaystyle b_1,\dots,b_m; b_{m+1},\dots,b_q}\end{array} \bigg\vert\, (-1)^m \zeta \right), \quad 1 \leq m \leq q, \lb{3.8}
\end{equation}
satisfies the ordinary differential equation of order $q$, 
\begin{equation}
\Bigg[\prod_{j=1}^q \bigg(\zeta \f{d}{d\zeta} - b_j\bigg) - \zeta \Bigg] u(\zeta) = 0, 
\quad \zeta \in \bbC\backslash\{0\}.       \lb{3.9} 
\end{equation}

\noindent 
{\bf Case (1):} In the generic case, where all congruence classes consist of precisely one element, equivalently, when $r=q$, then linearly independent solutions, in fact, a fundamental system of solutions of \eqref{3.9}, is given by
\begin{align}
& G_{0,q}^{1,0}\left(\!\begin{array}{c} {\displaystyle } \\
{\displaystyle b_j; b_1,\dots, b_{j-1},b_{j+1},\dots,b_q}\end{array} \bigg\vert - \zeta \right) 
= \f{\zeta^{b_j}}{\prod_{j'=1}^q \Gamma(1+b_j-b_{j'})}     \no \\
& \qquad \times {}_0 F_{q-1}\bigg(\!\begin{array}{c} {\displaystyle } \\
{\displaystyle 1+b_j-b_1,\dots,1+b_{j}-b_{j-1},1+b_j-b_{j+1},\dots,1+b_j-b_q}\end{array} \bigg\vert\, \zeta \bigg),  \no \\
& \quad \underset{\zeta \to 0}{=} C_j \zeta^{b_j}[1 + \Oh(\zeta)], \quad C_j \in \bbC \backslash \{0\}, \; 1 \leq j \leq q, \; \zeta \in \bbC\backslash\{0\},   \lb{3.10}
\end{align}
a special case of the situation mentioned in Section \ref{s3}. \\[2mm] 
\noindent 
{\bf Case (2):} It suffices to consider the congruence class $\{b_1,\dots,b_{n_1}\}$ in some detail as the remaining ones are treated in exactly the same manner. The existence of a system of linearly independent solutions of \eqref{3.9} will be demonstrated by the different asymptotic behavior of solutions as $\zeta \to 0$. The treatment of all congruence classes in \eqref{3.4} then yields a fundamental system of solutions of \eqref{3.9}. 

Referring to \eqref{3.8}, one obtains the system of solutions, 
\begin{align}
y_1(\zeta) &= G_{0,q}^{1,0}\left(\!\begin{array}{c} {\displaystyle } \\
{\displaystyle b_1; b_2,\dots,b_q}\end{array} \bigg\vert - \zeta \right)    \no \\
&= \f{\zeta^{b_1}}{\prod_{j'=1}^q \Gamma(1-b_1-b_{j'})} \, {}_0 F_{q-1}\bigg(\!\begin{array}{c} {\displaystyle } \\
{\displaystyle 1+b_1-b_2,\dots,1+b_1-b_q}\end{array} \bigg\vert\, \zeta \bigg),  \no \\
& \hspace*{-1.5mm} \underset{\zeta \to 0}{=} C_{1,1} \zeta^{b_1} [1 + \Oh(\zeta)],    \no \\ 
y_2(\zeta) &= G_{0,q}^{2,0}\left(\!\begin{array}{c} {\displaystyle } \\
{\displaystyle b_1, b_2; b_3\dots,b_q}\end{array} \bigg\vert\, \zeta \right)     \no \\
&  \hspace*{-1.5mm} \underset{\zeta \to 0}{=} C_{2,1} \zeta^{b_1} \ln(\zeta) [1 + \Oh(\zeta)] 
+ D_{2,1,b_1} \zeta^{b_1} [1 + \Oh(\zeta)] + C_{2,2} \zeta^{b_2} [1 + \Oh(\zeta)],     \no \\ 
&  \hspace*{-1.5mm} \underset{\zeta \to 0}{=} C_{2,1} \zeta^{b_1} \ln(\zeta) [1 + \Oh(\zeta)]  
+ C_{2,2} \zeta^{b_2} [1 + \Oh(\zeta)],     \no \\
& \hspace*{-6mm} \vdots    \no \\
y_{n_1}(\zeta) &= G_{0,q}^{n_1,0}\left(\!\begin{array}{c} {\displaystyle } \\
{\displaystyle b_1,\dots,b_{n_1}; b_{n_1+1},\dots,b_q}\end{array} \bigg\vert\, (-1)^{n_1} \zeta \right)     \no \\
& \hspace*{-1.5mm} \underset{\zeta \to 0}{=} C_{n_1,1} \zeta^{b_1} [\ln(\zeta)]^{n_1-1} [1 + \Oh(\zeta)]     
+ \zeta^{b_1} \sum_{k=2}^{n_1} D_{n_1,k,b_1} [\ln(\zeta)]^{n_1-k}[1 + \Oh(\zeta)]      \no \\ 
& \quad \; + C_{n_1,2} \zeta^{b_2} [\ln(\zeta)]^{n_1-2} [1 + \Oh(\zeta)] 
+ \zeta^{b_2} \sum_{k=3}^{n_1} D_{n_1,k,b_2} [\ln(\zeta)]^{n_1-k}[1 + \Oh(\zeta)]      \no \\ 
& \quad \; + \dots + C_{n_1,n_1} \zeta^{b_{n-1}} [1 + \Oh(\zeta)]      \no \\
& \hspace*{-1.5mm} \underset{\zeta \to 0}{=} C_{n_1,1} \zeta^{b_1} [\ln(\zeta)]^{n_1-1} [1 + \Oh(\zeta)]  
+ C_{n_1,2} \zeta^{b_2} [\ln(\zeta)]^{n_1-2} [1 + \Oh(\zeta)]     \no \\ 
& \quad \; + \dots + C_{n_1,n_1} \zeta^{b_{n_1}} [1 + \Oh(\zeta)]; \quad \zeta \in \bbC\backslash\{0\},   \lb{3.11}
\end{align}
where we used the ordering of the $b$'s in \eqref{3.6} to ignore all the $D$-terms corresponding to the coefficients $D_{2,1,b_1},\dots,D_{n_1,n_1,b_{n_1-1}}$. In addition, 
\begin{equation}
\text{\it all terms of the type $[1 + \Oh(\zeta)]$ are analytic at $\zeta = 0$,}      \lb{3.12}
\end{equation}
and 
\begin{align}
& C_{1,1} \in \bbC \backslash \{0\},   \no \\
& C_{2,2} \in \bbC \backslash \{0\} \, \text{ if $\Re(b_1) > \Re(b_2)$,} \quad C_{2,1} \in \bbC \backslash \{0\} \, 
\text{ if $\Re(b_1) = \Re(b_2)$ (i.e., $b_1=b_2$),}     \no \\
& C_{3,3} \in \bbC \backslash \{0\} \, \text{ if $\Re(b_2) > \Re(b_3)$,} \quad C_{3,2} \neq 0 \, 
\text{ if $\Re(b_1) > \Re(b_2) = \Re(b_3)$,}  \lb{3.13}    \no \\
& \quad C_{3,1} \in \bbC \backslash \{0\} \, \text{ if $\Re(b_1) = \Re(b_2) = \Re(b_3)$,}     \\
& \quad \vdots     \no \\
& C_{n_1,n_1} \in \bbC \backslash \{0\} \, \text{ if $\Re(b_{n_1-1}) > \Re(b_{n_1})$,} \dots     \no \\
& \quad \dots, C_{n_1,1} \in \bbC \backslash \{0\} \, \text{ if $\Re(b_1) = \Re(b_2) = \dots = \Re(b_{n_1})$.}   \no 
\end{align}
Clearly, the smallest $\Re(b_{\ell})$ on the right-hand sides of \eqref{3.11}, $1 \leq \ell \leq n_1$, together with the largest power of $\ln(\zeta)$ gives rise to the leading asymptotics as $\zeta \to 0$ in each of the solutions $y_j$, $1 \leq j \leq n_1$, in \eqref{3.11}. 

Taking \eqref{3.13} for granted, the asymptotic behavior as $\zeta \to 0$ in \eqref{3.11} shows that the latter represents a system of linearly independent solutions of \eqref{3.9}. 

To verify the statements in \eqref{3.12} and \eqref{3.13}, one relies on items ${\bf (I)}$--${\bf (VI)}$ as follows: Starting with the solution $y_1$, one analyzes 
\begin{equation}
\f{1}{2 \pi i} \int_{\cC_{b_1}} d \omega \, \f{\Gamma(b_1-\omega)}{\prod_{j=2}^q \Gamma(1-b_j+\omega)} (-\zeta)^{\omega},   
\end{equation} 
where the integrand has simple poles precisely at $\omega \in b_1+\bbN_0$ and is analytic in the region enclosed by $\cC_{b_1}$ and in a neighborhood of $\partial \cC_{b_1}$. Thus, one obtains from the residue theorem (taking into account that $\cC_{b_1}$ is negatively oriented)
\begin{align}
\begin{split} 
& \f{1}{2 \pi i} \int_{\cC_{b_1}} d \omega \, \f{\Gamma(b_1-\omega)}{\prod_{j=2}^q \Gamma(1-b_j+\omega)} (-\zeta)^{\omega}   \\
& \quad = - \sum_{n \in \bbN_0} {\rm Res\,}_{\omega= b_1+n} \bigg(\f{\Gamma(b_1-\omega)}{\prod_{j=2}^q \Gamma(1-b_j+\omega)} (-\zeta)^{\omega}\bigg)     \\
& \quad = - \sum_{n \in \bbN_0} \lim_{\omega \to b_1+n} \bigg[\f{(\omega - b_1-n) \Gamma(b_1-\omega)}{\prod_{j=2}^q \Gamma(1-b_j+\omega)} (-\zeta)^{\omega}\bigg]     \lb{3.15} \\
& \quad = - \sum_{n \in \bbN_0} \f{(-1)^{n+1}}{\Gamma(n+1)} \f{1}{\prod_{j=2}^q \Gamma(1-b_j+b_1+n)} (-\zeta)^{b_1+n}   \\
& \hspace*{2.5mm} \underset{\zeta \to 0}{=} \f{(-1)^{b_1}}{\prod_{j=2}^q \Gamma(1 + b_1 - b_j)} \zeta^{b_1} 
\underbrace{[1+\Oh(\zeta)]}_{\text{analytic at $\zeta=0$}} 
+ D \zeta^{b_1+1}\underbrace{[1+\Oh(\zeta)]}_{\text{analytic at $\zeta=0$}}      \\
& \hspace*{2.5mm} \underset{\zeta \to 0}{=} C_{1,1} \zeta^{b_1} [1+\Oh(\zeta)], 
\quad C_{1,1} \in \bbC \backslash \{0\}, \; D \in \bbC.   
\end{split}
\end{align} 
The fact that the $n=0$ term under the sum in \eqref{3.15} yields the leading asymptotic contribution as $\zeta \to 0$ is a general phenomenon that persists for the remaining solutions $y_2,\dots,y_{n_1}$, in fact, it applies to all solutions $y_1,\dots,y_q$ of \eqref{3.9}. 

Turning to the solution $y_2$, one now considers 
\begin{equation}
\f{1}{2 \pi i} \int_{\cC_{b_1,b_2}} d \omega \, \f{\Gamma(b_1-\omega) \Gamma(b_2-\omega)}{\prod_{j=2}^q \Gamma(1-b_j+\omega)} 
\zeta^{\omega},   \lb{3.16} 
\end{equation}
where the integrand now has poles at $\omega \in b_1+\bbN_0$ and $\omega \in b_2+\bbN_0$. More precisely, since $\Re(b_1) \geq \Re(b_2)$, one has $(b_1 - b_2) \in \bbN_0$, and thus $b_1$ and $b_2$ lie on a straight line parallel to the $\Im(\omega) = 0$-axis. In addition, the integrand in \eqref{3.16} has double (and simple) poles at $\omega \in b_1 + \bbN_0$, and simple poles at $\{b_2, b_2+1, \dots,b_2 +(b_1-b_2) - 1\}$, unless $\Re(b_1) = \Re(b_2)$ (and hence $b_1=b_2$) in which case the simple poles are absent.

For the simple poles (assuming temporarily that $\Re(b_1) > \Re(b_2)$) one closely follows the computation in \eqref{3.15}, and hence infers
\begin{align}
\begin{split} 
& \f{1}{2 \pi i} \int_{\cC_{b_1,b_2}} d \omega \, \f{\Gamma(b_1-\omega) \Gamma(b_2-\omega)}{\prod_{j=2}^q \Gamma(1-b_j+\omega)} 
\zeta^{\omega}   \\ 
& \quad = - \sum_{n=0}^{b_1-b_2-1} {\rm Res\,}_{\omega= b_2+n} \bigg(\f{\Gamma(b_1-\omega) \Gamma(b_2-\omega)}{\prod_{j=3}^q \Gamma(1-b_j+\omega)} \zeta^{\omega}\bigg)     \\ 
& \quad = - \sum_{n=0}^{b_1-b_2-1} \lim_{\omega \to b_2+n} 
\bigg[\f{[(\omega - b_2-n) \Gamma(b_2-\omega)] \Gamma(b_1-\omega)}{\prod_{j=3}^q \Gamma(1-b_j+\omega)} \zeta^{\omega}\bigg]    \lb{3.17} \\
& \hspace*{2.5mm} \underset{\zeta \to 0}{=} \f{\Gamma(b_1-b_2)}{\prod_{j=3}^q \Gamma(1 + b_1 - b_j)} \zeta^{b_2} 
\underbrace{[1+\Oh(\zeta)]}_{\text{analytic at $\zeta=0$}} 
+ D \zeta^{b_2+1}\underbrace{[1+\Oh(\zeta)]}_{\text{analytic at $\zeta=0$}}       \\
& \hspace*{2.5mm} \underset{\zeta \to 0}{=} C_{2,2} \zeta^{b_2} [1+\Oh(\zeta)], 
\quad C_{2,2} \in \bbC \backslash \{0\}, \; D \in \bbC.    
\end{split}
\end{align}
Once again, the leading order contribution as $\zeta \to 0$ stems from the $n=0$ term under the sum in \eqref{3.17}.  

For the second order poles at $\omega \in b_1+\bbN_0$, one now obtains
\begin{align}
& \f{1}{2 \pi i} \int_{\cC_{b_1,b_2}} d \omega \, \f{\Gamma(b_1-\omega) \Gamma(b_2-\omega)}{\prod_{j=2}^q 
\Gamma(1-b_j+\omega)} \zeta^{\omega}      \no \\ 
& \quad = - \sum_{n \in \bbN_0} {\rm Res\,}_{\omega= b_1+n} \bigg(\f{\Gamma(b_1-\omega) \Gamma(b_2-\omega)}{\prod_{j=3}^q \Gamma(1-b_j+\omega)} \zeta^{\omega}\bigg)     \no \\ 
& \quad = - \sum_{n \in \bbN_0} \lim_{\omega \to b_1+n} \f{d}{d\omega} \bigg[
\f{(\omega - b_1 - n)^2 \Gamma(b_1-\omega) \Gamma(b_2-\omega)}{\prod_{j=3}^q \Gamma(1-b_j+\omega)} \zeta^{\omega}\bigg]    \no \\
& \quad = - \lim_{\omega \to b_1} \f{d}{d\omega} \bigg[
\f{(\omega - b_1)^2 \Gamma(b_1-\omega) \Gamma(b_2-\omega)}{\prod_{j=3}^q \Gamma(1-b_j+\omega)} \zeta^{\omega}\bigg]     \\
& \hspace*{2.5mm} \underset{\zeta \to 0}{=}  \f{(-1)^{1+b_1-b_2}}{\Gamma(1+b_1-b_2) \prod_{j=3}^q \Gamma(1+b_1-b_j)} \zeta^{b_1} \ln(\zeta) \underbrace{[1+\Oh(\zeta)]}_{\text{analytic at $\zeta=0$}}      \no \\
& \hspace*{6mm} \quad + D_{2,1,b_1} \zeta^{b_1} \underbrace{[1+\Oh(\zeta)]}_{\text{analytic at $\zeta=0$}}       \no \\
& \hspace*{2.5mm} \underset{\zeta \to 0}{=} C_{2,1} \zeta^{b_1} \ln(\zeta) [1 + \Oh(\zeta)] 
+ D_{2,1,b_1} \zeta^{b_1} [1 + \Oh(\zeta)], \quad C_{2,1} \in \bbC \backslash \{0\}, \; D_{2,1,b_1} \in \bbC.    \no
\end{align}
(We recall that $(b_1 - b_2) \in \bbN_0$.) Since $\Re(b_1) \geq \Re(b_2)$, the term $D_{2,1,b_1} \zeta^{b_1} [1 + \Oh(\zeta)]$ is subordinate to $C_{2,2} \zeta^{b_2} [1+\Oh(\zeta)]$ and hence omitted in $y_2$ in \eqref{3.11}. 

One can now iterate this procedure for all $y_3, \dots, y_{n_1}$. In particular, if
\begin{equation}
\Re(b_{\ell-1}) > \Re(b_{\ell})= \dots = \Re(b_{n_1}), \quad 1 \leq \ell \leq n_1,
\end{equation}
then for $n \in \bbN_0$, one obtains in connection with the solution $y_{n_1}$ 
\begin{align}
\begin{split} 
& \lim_{\omega \to b_{\ell} + n} \f{d^{n_1-\ell}}{d\omega^{n_1-\ell}} \bigg[(\omega - b_{\ell}-n)^{n_1-\ell+1}     \\
& \qquad \;\, \times \f{\Gamma(b_1-\omega) \dots \Gamma(b_{\ell-1}-\omega)
[\Gamma(b_{\ell}-\omega) \dots \Gamma(b_{n_1}-\omega)]}{\prod_{j=n_1+1}^q \Gamma(1-b_j+\omega)} \zeta^{\omega}
\bigg]     \\
& \hspace*{2.5mm} \underset{\zeta \to 0}{=} C_{n_1,\ell} \zeta^{b_{\ell}} [\ln(\zeta)]^{n_1 - \ell} [1 + \Oh(\zeta)] 
+ \zeta^{b_{\ell}} \sum_{k=\ell+1}^{n_1} D_{n_1,k,b_{\ell}} [\ln(\zeta)]^{n_1-k}[1 + \Oh(\zeta)],      \\
& \hspace*{5.08cm} C_{n_1,\ell} \in \bbC \backslash \{0\}, \; D_{n_1,k,b_{\ell}} \in \bbC, \; \ell+1 \leq k \leq n_1.    
\end{split} 
\end{align}

At this point one repeats the analysis for each of the remaining congruence classes in \eqref{3.4}. In particular, repeating the analog of \eqref{3.11} for the remaining congruence classes in \eqref{3.4}, and noting that solutions belonging to different congruence classes are automatically linearly independent, then yields a fundamental system of solutions of \eqref{3.9}. 

Summarizing the discussion in this section one arrives at the following result:

\begin{theorem} \lb{t3.1}
Assume the notation established in \eqref{3.4}--\eqref{3.9}. Then a fundamental system of solutions of the $q$th-order differential equation \eqref{3.9} is given as follows: 
\begin{align}
y_1(\zeta) &= G_{0,q}^{1,0}\left(\!\begin{array}{c} {\displaystyle } \\
{\displaystyle b_1; b_2,\dots,b_q}\end{array} \bigg\vert - \zeta \right)   \no \\
&= \f{\zeta^{b_1}}{\prod_{j'=1}^q \Gamma(1+b_1-b_{j'})} \, {}_0 F_{q-1}\bigg(\!\begin{array}{c} {\displaystyle } \\
{\displaystyle 1+b_1-b_2,\dots,1+b_1-b_q}\end{array} \bigg\vert\, \zeta \bigg),  \no \\
y_2(\zeta) &= G_{0,q}^{2,0}\left(\!\begin{array}{c} {\displaystyle } \\
{\displaystyle b_1, b_2; b_3\dots,b_q}\end{array} \bigg\vert \, \zeta \right),   \no \\
& \hspace*{-6mm} \vdots    \no \\
y_{n_1}(\zeta) &= G_{0,q}^{n_1,0}\left(\!\begin{array}{c} {\displaystyle } \\
{\displaystyle b_1,\dots,b_{n_1}; b_{n_1+1},\dots,b_q}\end{array} \bigg\vert\, (-1)^{n_1} \zeta \right),     \no \\
y_{n_1+1}(\zeta) &= G_{0,q}^{1,0}\left(\!\begin{array}{c} {\displaystyle } \\
{\displaystyle b_{n_1+1}; b_1,\dots,b_{n_1},b_{n_1+2},\dots,b_q}\end{array} \bigg\vert - \zeta \right)     \no \\
&= \f{\zeta^{b_{n_1+1}}}{\prod_{j'=1}^q \Gamma(1+b_{n_1+1}-b_{j'})}     \no \\
& \quad \times {}_0 F_{q-1}\bigg(\!\begin{array}{c} {\displaystyle } \\
{\displaystyle 1+b_{n_1+1}-b_2,\dots,1+b_{n_1+1}-b_q}\end{array} \bigg\vert\, \zeta \bigg),  \no \\
y_{n_1+2}(\zeta) &= G_{0,q}^{2,0}\left(\!\begin{array}{c} {\displaystyle } \\
{\displaystyle b_{n_1+1}, b_{n_1+2}; b_1,\dots,b_{n_1},b_{n_1+3},\dots,b_q}\end{array} \bigg\vert\, \zeta \right),     \no \\
& \hspace*{-6mm} \vdots     \\
y_{n_1+n_2}(\zeta) &= G_{0,q}^{n_2,0}\left(\!\begin{array}{c} {\displaystyle } \\
{\displaystyle b_{n_1+1}, b_{n_1+2}, \dots, b_{n_1+n_2}; b_{n_1+n_2+1},\dots,b_q}\end{array} \bigg\vert\, (-1)^{n_2} \zeta \right),      \no \\
& \hspace*{-6mm} \vdots    \no \\
& \hspace*{-2cm} y_{n_1+n_2+ \cdots +n_{r-1}+1}(\zeta)       \no \\ 
& \hspace*{-1.5cm} = G_{0,q}^{1,0}\left(\!\begin{array}{c} {\displaystyle } \\
{\displaystyle b_{n_1+ \cdots + n_{r-1}+1}; b_1, \dots, b_{n_1+ \cdots + n_{r-1}}, b_{n_1+ \cdots + n_{r-1}+2},\dots,b_q}\end{array} \bigg\vert\, - \zeta \right)     \no \\
&  \hspace*{-1.5cm} = \f{\zeta^{b_{n_1+ \cdots + n_{r-1}+1}}}{\prod_{j'=1}^q \Gamma(1+b_{n_1+ \cdots + n_{r-1}+1} -b_{j'})}     \no \\
&  \hspace*{-1.05cm} \times {}_0 F_{q-1}\bigg(\!\begin{array}{c} {\displaystyle } \\
{\displaystyle 1+b_{n_1+ \cdots + n_{r-1}+1}-b_2,\dots,1+b_{n_1+ \cdots + n_{r-1}+1}-b_q}\end{array} \bigg\vert\, \zeta \bigg),  \no \\
& \hspace*{-2cm} y_{n_1+n_2+ \cdots +n_{r-1}+2}(\zeta)       \no \\ 
& \hspace*{-1.5cm} = G_{0,q}^{2,0}\bigg(\!\begin{array}{c} {\displaystyle } \\
{\displaystyle b_{n_1+ \cdots + n_{r-1}+1}, b_{n_1+ \cdots + n_{r-1}+2}; b_1, \dots, b_{n_1+ \cdots + n_{r-1}},}
\end{array}     \no \\ 
& \hspace*{3.1cm} \begin{array}{c} {\displaystyle } \\ 
{\displaystyle b_{n_1+ \cdots + n_{r-1}+3},\dots,b_q}\end{array} \bigg\vert\, \zeta \bigg),     \no \\
& \hspace*{-6mm} \vdots    \no \\ 
& \hspace*{-2cm} y_{n_1+n_2+ \cdots +n_{r-1}+n_r}(\zeta) \equiv y_q(\zeta)       \no \\ 
& \hspace*{-1.5cm} = G_{0,q}^{n_r,0}\left(\!\begin{array}{c} {\displaystyle } \\
{\displaystyle b_{n_1+ \cdots + n_{r-1}+1}, \dots, b_{n_1+ \cdots + n_{r-1}+n_r} \equiv b_q; b_1, \dots, b_{n_1+ \cdots + n_{r-1}}}\end{array} \bigg\vert\, (-1)^{n_r} \zeta \right);     \no \\
& \hspace*{10.35cm} \zeta \in \bbC \backslash \{0\}.     \no    
\end{align}
\end{theorem}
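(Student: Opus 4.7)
The plan is to assemble the detailed asymptotic analysis developed in Section \ref{s3} into a clean statement. The argument has three components: (i) each listed $y_j$ solves the ODE \eqref{3.9}; (ii) the $n_s$ functions coming from a single congruence class are linearly independent; (iii) the blocks attached to different congruence classes are jointly independent. Since $\sum_{s=1}^r n_s = q$ by \eqref{3.5}, this yields a basis for the $q$-dimensional solution space of \eqref{3.9}.

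For (i), each $y_j$ is displayed as $G^{m,0}_{0,q}\big(\cdots \mid (-1)^m \zeta\big)$ for a suitable $m \in \{1,\dots,n_s\}$ and a permutation of the $b$-parameters that places the ``upper'' block first. The remark around \eqref{3.8}--\eqref{3.9} records that every function of this form satisfies \eqref{3.9}, regardless of the ordering of the parameters, so no computation is needed here. For (ii), fix the congruence class $\{b_{n_1+\cdots+n_{s-1}+1},\dots,b_{n_1+\cdots+n_s}\}$ and run the residue analysis exactly as in \eqref{3.11}--\eqref{3.17}: the $k$th member $y_{n_1+\cdots+n_{s-1}+k}$ is the sum of residues at poles of order up to $k$, and the formulas \eqref{3.3}--\eqref{3.3a} show that its expansion as $\zeta \to 0$ is a combination of terms $\zeta^{b_\ell}[\ln(\zeta)]^{k-\ell}$ (for $\ell=1,\dots,k$ within the class). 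Reading off the leading term according to the ordering \eqref{3.6} and using the nonvanishing of the $C_{k,\ell}$ given in \eqref{3.13}, we see that the $k$ functions produce $k$ strictly distinct leading asymptotic profiles, so no nontrivial linear combination can vanish identically.

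For (iii), suppose we have a vanishing linear combination $\sum_{s,k} c_{s,k}\, y_{n_1+\cdots+n_{s-1}+k}(\zeta) \equiv 0$. Expand each summand in its $\zeta \to 0$ asymptotic series; within the sum, all exponents of $\zeta$ arising from block $s$ are translates of $b_{n_1+\cdots+n_{s-1}+1}$ by integers, and by \eqref{3.7} the residues mod $\bbZ$ of the exponents coming from different blocks are distinct. Hence the contributions from distinct blocks cannot cancel, which reduces the independence claim to the already-settled within-block case (ii). Combining this with the count $\sum_s n_s = q$ produces a fundamental system.

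The main obstacle is the verification of \eqref{3.13}, that is, the nonvanishing of the leading coefficients $C_{k,\ell}$ that emerge from the order-$k$ residue computation. Concretely, one applies \eqref{3.3} with $k-1$ derivatives and invokes the Leibniz rule: the coefficient of the top logarithmic power $[\ln(\zeta)]^{k-1}$ is produced exactly when all derivatives land on $\zeta^\omega$ (by \eqref{3.3a}), and it equals, up to an explicit sign and a factorial, the product of the regular factors $\Gamma(b_j-b_\ell)$ (for the other $b$'s in the block) and $1/\Gamma(1-b_j+b_\ell)$ (for $j$ outside the block) evaluated at the appropriate point. The reciprocal gamma factors are entire (item ${\bf (III)}$), and the surviving $\Gamma$-values are finite and nonzero by the ordering \eqref{3.6}, so $C_{k,\ell} \neq 0$. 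The bookkeeping of which power of $\zeta$ pairs with which power of $\ln(\zeta)$ — controlled by whether consecutive $\Re(b_\ell)$'s are equal or strict — is exactly what is encoded in the case split \eqref{3.13}; once this is laid out, the proof of the theorem is complete.
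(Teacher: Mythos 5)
Your proposal is correct and follows essentially the same route as the paper: the paper's proof of Theorem \ref{t3.1} is precisely the preceding residue analysis of Section \ref{s3} (the expansions \eqref{3.11}, the nonvanishing claims \eqref{3.13} obtained from the order-$k$ residue formula \eqref{3.3} with the top logarithmic power coming from all derivatives falling on $\zeta^{\omega}$ via \eqref{3.3a}, and the observation that solutions attached to distinct congruence classes are automatically independent). Your only additions — spelling out that exponents from different congruence classes cannot cancel because they lie in distinct residue classes ${\rm mod}\,\bbZ$, and the explicit Leibniz-rule justification of \eqref{3.13} — are details the paper leaves implicit, not a different argument.
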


\section{The Generalized Eigenvalue Problem for Euler's Differential Equation} \lb{s4}

To fix our notation we denote by $\tau_N(\lambda_1,\dots,\lambda_N)$ the $N$th-order Euler differential expression in the form
\begin{equation}
\tau_N(\lambda_1,\dots,\lambda_N) = z^{-N} \prod_{j=1}^N \bigg[z\f{d}{dz} - \lambda_j\bigg], \quad 
\lambda_j \in \bbC, \, 1 \leq j \leq N, \; z \in \bbC\backslash\{0\}
\lb{4.1} 
\end{equation}
(noting the commutativity of the factors $[z(d/dz) - \lambda_j]$, $1\leq j \leq N$, on the right-hand side of \eqref{4.1}), in the complex domain, and  in the following focus on the generalized eigenvalue problem associated with \eqref{4.1}, that is, 
\begin{equation}
\tau_N(\lambda_1,\dots,\lambda_N) y(\mu,z)= \mu y(\mu,z), \quad \lambda_j \in \bbC, \, 1 \leq j \leq N, \; \mu \in \bbC, \; 
z \in \bbC\backslash\{0\},   \lb{4.2} 
\end{equation}
with $\mu \in \bbC$ the spectral parameter.

To make the connection of \eqref{4.2} with Meijer's $G$-functions we start by noting the following fact:

\begin{theorem} \lb{t4.1}
Consider the generalized eigenvalue problem for the $N$th-order Euler differential equation \eqref{4.2}. 
Upon introducing the change of variables
\begin{equation}
\zeta = \mu (z/N)^N, \quad \eta(\zeta) = y(\mu,z),     \lb{4.3}
\end{equation}
the generalized eigenvalue problem \eqref{4.2} turns into
\begin{equation}
\Bigg[\prod_{j=1}^N \bigg(\zeta \f{d}{d\zeta} - \f{\lambda_j}{N}\bigg) - \zeta\Bigg] \eta(\zeta) = 0, 
\quad \zeta \in \bbC\backslash\{0\},     \lb{4.4}
\end{equation}
the differential equation for 
$G_{0,N}^{m,0}\Big(\!\begin{array}{c} {\scriptstyle } \\
{\scriptstyle \lambda_1/N, \dots, \lambda_m/N; \lambda_{m+1}, \dots, \lambda_N/N}\end{array} 
\Big\vert\, (-1)^m \zeta\Big)$, $1 \leq m \leq N$, $\zeta \in \bbC\backslash\{0\}$. 

In particular, assuming the generic case where
\begin{equation}
[\lambda_j - \lambda_{j'}]/N \in \bbC \backslash \bbZ, \quad 1 \leq j, j' \leq N, \; j \neq j',    \lb{4.5}
\end{equation}
then 
\begin{align}
\eta_j(\zeta) &= G_{0,N}^{1,0}\left(\!\begin{array}{c} {\displaystyle } \\
{\displaystyle \lambda_j/N;\lambda_1/N,\dots,\lambda_{j-1}/N, \lambda_{j+1}/N,\dots,\lambda_N/N}\end{array} \bigg\vert -\zeta \right)    \no \\
& = \f{\zeta^{\lambda_j/N}}{\prod_{j'=1}^N \Gamma\Big(1+\f{\lambda_j-\lambda_{j'}}{N}\Big)}     \lb{4.7} \\
& \quad \times {}_0 F_{N-1}\bigg(\!\begin{array}{c} {\displaystyle } \\
{\displaystyle 1+[(\lambda_j-\lambda_1)/N],\dots,1+[(\lambda_{j}-\lambda_{j-1})/N],}\end{array} 
\no \\ 
& \hspace*{2.4cm} \begin{array}{c} {\displaystyle } \\
{\displaystyle 1+[(\lambda_j-\lambda_{j+1})/N],\dots,1+[(\lambda_j-\lambda_N)/N]}\end{array} \bigg\vert\, \zeta \bigg),  
\no \\
& \hspace*{6.95cm} 1 \leq j \leq N, \; \zeta \in \bbC\backslash\{0\},     \no
\end{align}
represents a fundamental system of solutions of \eqref{4.4}, subject to \eqref{4.5}. 

Finally,
\begin{align}
y_j(\mu,z) &= G_{0,N}^{1,0}\left(\!\begin{array}{c} {\displaystyle } \\
{\displaystyle \lambda_j/N;\lambda_1/N,\dots,\lambda_{j-1}/N, \lambda_{j+1}/N,\dots,\lambda_N/N}\end{array} \bigg\vert - \mu (z/N)^N\right)   \no \\
& = \f{\mu^{\lambda_j/N} (z/N)^{\lambda_j}}{\prod_{j'=1}^N \Gamma\Big(1+\f{\lambda_j-\lambda_{j'}}{N}\Big)}      \lb{4.8} \\
& \quad \times {}_0 F_{N-1}\bigg(\!\begin{array}{c} {\displaystyle } \\
{\displaystyle 1+[(\lambda_j-\lambda_1)/N],\dots,1+[(\lambda_{j}-\lambda_{j-1})/N],}\end{array} 
\no \\ 
& \hspace*{2.55cm} \begin{array}{c} {\displaystyle } \\
{\displaystyle 1+[(\lambda_j-\lambda_{j+1})/N],\dots,1+[(\lambda_j-\lambda_N)/N]}\end{array} \bigg\vert\, \mu (z/N)^N\bigg),  \no \\
& \hspace*{8.4cm} 1 \leq j \leq N, \; \zeta \in \bbC\backslash\{0\},    \no 
\end{align}
represents a fundamental system of solutions of the generalized Euler eigenvalue problem \eqref{4.2}, subject to \eqref{4.5}.
\end{theorem}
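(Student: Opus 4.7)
The plan is to execute the change of variables \eqref{4.3} at the level of the first-order operator $zd/dz$, promote this to the full product in \eqref{4.1}, and then invoke the machinery of Section \ref{s3} to read off the fundamental system. The substantive content is contained in one computation; identifying the resulting equation with \eqref{3.9} is then immediate.

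First I would compute $zd/dz$ in the new variable. Since $\zeta=\mu(z/N)^N$ gives $d\zeta/dz=N\zeta/z$, the chain rule yields
\begin{equation}
z\frac{d}{dz}=N\zeta\frac{d}{d\zeta},\no
\end{equation}
and hence
\begin{equation}
z\frac{d}{dz}-\lambda_j=N\bigg[\zeta\frac{d}{d\zeta}-\frac{\lambda_j}{N}\bigg],\quad 1\leq j\leq N.\no
\end{equation}
Because the factors $[z(d/dz)-\lambda_j]$ commute, I can multiply these relations to obtain
\begin{equation}
\prod_{j=1}^N\bigg[z\frac{d}{dz}-\lambda_j\bigg]=N^N\prod_{j=1}^N\bigg[\zeta\frac{d}{d\zeta}-\frac{\lambda_j}{N}\bigg],\no
\end{equation}
and using $z^{-N}N^N=\mu/\zeta$, the eigenvalue equation \eqref{4.2} rewritten for $\eta$ becomes
\begin{equation}
\frac{\mu}{\zeta}\prod_{j=1}^N\bigg[\zeta\frac{d}{d\zeta}-\frac{\lambda_j}{N}\bigg]\eta(\zeta)=\mu\,\eta(\zeta),\no
\end{equation}
which, after clearing $\mu/\zeta$, is precisely \eqref{4.4}.

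Next I would identify \eqref{4.4} with the Meijer differential equation \eqref{3.9} via the assignment $q=N$, $b_j=\lambda_j/N$. By \eqref{3.8} this is the differential equation satisfied by $G_{0,N}^{m,0}$ with the stated parameter list and the sign $(-1)^m$, establishing the first assertion of the theorem. Under hypothesis \eqref{4.5}, the parameters $b_j=\lambda_j/N$ satisfy the separation condition \eqref{2.15}, so the generic Case (1) of Section \ref{s3} (equivalently the $p=0$ specialization of \eqref{2.16}--\eqref{2.17}) supplies the fundamental system $\eta_j(\zeta)$ of \eqref{4.7}; the linear independence follows from the distinct leading exponents $\zeta^{\lambda_j/N}$ at $\zeta=0$, as recorded in \eqref{3.10}.

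Finally I would transport everything back through \eqref{4.3} to obtain the $y_j(\mu,z)$. Substituting $\zeta=\mu(z/N)^N$ gives $\zeta^{\lambda_j/N}=\mu^{\lambda_j/N}(z/N)^{\lambda_j}$, which produces \eqref{4.8}; linear independence of the $y_j$ in $z$ is preserved because the change of variables $z\mapsto\zeta$ is locally a biholomorphism on any simply-connected subdomain of $\bbC\setminus\{0\}$. No serious obstacle arises: the only care required is a consistent choice of branch for $\zeta^{1/N}$ and $\mu^{\lambda_j/N}$ when passing between $z$ and $\zeta$, which is the standard multi-valuedness inherent in Euler-type equations and does not affect the statement on any simply-connected sheet.
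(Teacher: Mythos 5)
Your proposal is correct and follows essentially the same route as the paper, whose proof is just the one-line remark that \eqref{4.2}--\eqref{4.4} follow from elementary computation, that \eqref{4.7} is a consequence of \eqref{2.16}--\eqref{2.17}, and that \eqref{4.8} then follows from \eqref{4.3}; your write-up simply supplies the details of the chain-rule computation $z\,d/dz = N\zeta\,d/d\zeta$ and the identification $b_j=\lambda_j/N$ that the authors leave implicit.
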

\begin{proof}
While \eqref{4.2}--\eqref{4.4} follow from elementary computations, \eqref{4.5}--\eqref{4.7} are a consequence of \eqref{2.16}, \eqref{2.17}. Finally, \eqref{4.8} is clear from \eqref{4.3} and \eqref{4.7}. 
\end{proof}

\begin{example} \lb{e4.2} Let $N=1$, $\lambda_1 \in \bbC$, then 
\begin{equation}
\tau_1(\lambda_1) = z^{-1} \bigg[z\f{d}{dz} - \lambda_1\bigg], \quad z \in \bbC\backslash\{0\},     \lb{4.9} 
\end{equation}
and the solution of 
\begin{equation}
\tau_1(\lambda_1) y(\mu,z)= \mu y(\mu,z), \quad \mu \in \bbC, \; z \in \bbC\backslash\{0\},   \lb{4.10} 
\end{equation}
is given by 
\begin{equation}
y_1(\mu,z) = G_{0,1}^{1,0}\left(\!\begin{array}{c} {\displaystyle } \\
{\displaystyle \lambda_1}\end{array} \bigg\vert -\mu z \right) 
= (- \mu z)^{\lambda_1} {}_0 F_0 \left(\!\begin{array}{c} {\displaystyle } \\
{\displaystyle  }\end{array} \bigg\vert \, \mu z \right)  
= (-\mu z)^{\lambda_1} e^{\mu z}, \quad z \in \bbC.      \lb{4.11}
\end{equation}
\end{example}

To treat the non-generic cases where condition \eqref{4.5} is violated, we now adapt the discussion in Section \ref{s3} to the present situation. We start by introducing some notation and decompose the set $\{\lambda_1/N,\dots,\lambda_N/N\}$ into congruence classes ${\rm mod}\,\bbZ$ as follows: 
\begin{align}
\begin{split} 
& \{\lambda_1/N,\dots,\lambda_N/N\} = \{\underbrace{\lambda_1/N,\dots,\lambda_{n_1}/N}_{\text{$n_1$ terms}}\} \cup 
\{\underbrace{\lambda_{n_1+1}/N,\dots,\lambda_{n_1+n_2}/N}_{\text{$n_2$ terms}}\} \cup \cdots      \lb{4.12} \\
& \quad \cdots \cup \{\underbrace{\lambda_{n_1+\cdots + n_{r-1}+1}/N,\dots,\lambda_{n_1+\cdots +n_{r-1} + n_r}/N \equiv \lambda_N/N}_{\text{$n_r$ terms}}\},  
\end{split} \\
& \sum_{s=1}^r n_s = N,    \lb{4.13} \\
\begin{split}
& \Re(\lambda_1) \geq \cdots \geq \Re(\lambda_{n_1}), \\
& \Re(\lambda_{n_1+1}) \geq \cdots \geq \Re(\lambda_{n_1+n_2}), \\
& \quad \vdots      \lb{4.14} \\
&  \Re(\lambda_{n_1+\cdots n_{r-1}+1}) \geq \cdots \geq \Re(\lambda_{n_1+\cdots n_{r-1}+n_r}), 
\end{split} \\
\begin{split} 
& [\lambda_j - \lambda_{j'}]/N \in \bbZ, \; 1 \leq j, j' \leq n_1, \\
& [\lambda_{n_1+j} - \lambda_{n_1+j'}]/N \in \bbZ, \; 1 \leq j, j' \leq n_2,  \\
& \quad \vdots    \lb{4.15} \\
& [\lambda_{n_1+\cdots+n_{r-1}+j} - \lambda_{n_1+\cdot+n_{r-1}+j'}]/N \in \bbZ, \; 1 \leq j, j' \leq n_r.
\end{split}
\end{align}

We also recall that 
\begin{equation}
G_{0,N}^{m,0}\left(\!\begin{array}{c} {\displaystyle } \\
{\displaystyle \lambda_1/N,\dots,\lambda_m/N; \lambda_{m+1}/N,\dots,\lambda_N/N}\end{array} \bigg\vert\, (-1)^m \zeta \right), \quad 1 \leq m \leq N, \lb{4.16}
\end{equation}
satisfies the ordinary differential equation of order $N$, 
\begin{equation}
\Bigg[\prod_{j=1}^N \bigg(\zeta \f{d}{d\zeta} - \f{\lambda_j}{N}\bigg) - \zeta \Bigg] \eta(\zeta) = 0, 
\quad \zeta \in \bbC\backslash\{0\}.       \lb{4.17} 
\end{equation}

Given these preparations, Theorem \ref{t3.1} yields the following result:

\begin{theorem} \lb{t4.3}
Assume the notation established in \eqref{4.12}--\eqref{4.17}. Then a fundamental system of solutions of the $N$th-order differential equation \eqref{4.17} is given as follows: 
\begin{align}
\eta_1(\zeta) &= G_{0,N}^{1,0}\left(\!\begin{array}{c} {\displaystyle } \\
{\displaystyle \f{\lambda_1}{N}; \f{\lambda_2}{N},\dots,\f{\lambda_N}{N}}\end{array} \Bigg\vert - \zeta \right)   \no \\
&= \f{\zeta^{\lambda_1/N}}{\prod_{j'=1}^N \Gamma\Big(1+\f{\lambda_1-\lambda_{j'}}{N}\Big)} \, {}_0 F_{N-1}\Bigg(\!\begin{array}{c} {\displaystyle } \\
{\displaystyle 1+\f{\lambda_1-\lambda_2}{N},\dots,1+\f{\lambda_1-\lambda_N}{N}}\end{array} \Bigg\vert\, \zeta \Bigg),  \no \\
\eta_2(\zeta) &= G_{0,N}^{2,0}\left(\!\begin{array}{c} {\displaystyle } \\
{\displaystyle \f{\lambda_1}{N}, \f{\lambda_2}{N}; \f{\lambda_3}{N}\dots,\f{\lambda_N}{N}}\end{array} \Bigg\vert \, \zeta \right),   \no \\
& \hspace*{-6mm} \vdots    \no \\
\eta_{n_1}(\zeta) &= G_{0,N}^{n_1,0}\left(\!\begin{array}{c} {\displaystyle } \\
{\displaystyle \f{\lambda_1}{N},\dots,\f{\lambda_{n_1}}{N}; \f{\lambda_{n_1+1}}{N},\dots,\f{\lambda_N}{N}}\end{array} \Bigg\vert\, (-1)^{n_1} \zeta \right),     \no \\
\eta_{n_1+1}(\zeta) &= G_{0,N}^{1,0}\left(\!\begin{array}{c} {\displaystyle } \\
{\displaystyle \f{\lambda_{n_1+1}}{N}; \f{\lambda_1}{N},\dots,\f{\lambda_{n_1}}{N},\f{\lambda_{n_1+2}}{N},\dots,\f{\lambda_N}{N}}\end{array} \Bigg\vert - \zeta \right)     \no \\
&= \f{\zeta^{\lambda_{n_1+1}/N}}{\prod_{j'=1}^N \Gamma\Big(1+\f{\lambda_{n_1+1}-\lambda_{j'}}{N}\Big)}     \no \\
& \quad \times {}_0 F_{N-1}\Bigg(\!\begin{array}{c} {\displaystyle } \\
{\displaystyle 1+\f{\lambda_{n_1+1}-\lambda_2}{N},\dots,1+\f{\lambda_{n_1+1}-\lambda_N}{N}}\end{array} \Bigg\vert\, \zeta \Bigg),  \no \\
\eta_{n_1+2}(\zeta) &= G_{0,N}^{2,0}\left(\!\begin{array}{c} {\displaystyle } \\
{\displaystyle \f{\lambda_{n_1+1}}{N}, \f{\lambda_{n_1+2}}{N}; \f{\lambda_1}{N},\dots,\f{\lambda_{n_1}}{N},\f{\lambda_{n_1+3}}{N},\dots,\f{\lambda_N}{N}}\end{array} \Bigg\vert\, \zeta \right),     \no \\
& \hspace*{-6mm} \vdots     \lb{4.17a} \\
\eta_{n_1+n_2}(\zeta) &= G_{0,N}^{n_2,0}\left(\!\begin{array}{c} {\displaystyle } \\
{\displaystyle \f{\lambda_{n_1+1}}{N}, \f{\lambda_{n_1+2}}{N}, \dots, \f{\lambda_{n_1+n_2}}{N}; \f{\lambda_{n_1+n_2+1}}{N},\dots,\f{\lambda_N}{N}}\end{array} \Bigg\vert\, (-1)^{n_2} \zeta \right),      \no \\
& \hspace*{-6mm} \vdots    \no \\
& \hspace*{-2cm} \eta_{n_1+n_2+ \cdots +n_{r-1}+1}(\zeta)       \no \\ 
& \hspace*{-1.5cm} = G_{0,N}^{1,0}\left(\!\begin{array}{c} {\displaystyle } \\
{\displaystyle \f{\lambda_{n_1+ \cdots + n_{r-1}+1}}{N}; \f{\lambda_1}{N}, \dots, \f{\lambda_{n_1+ \cdots + n_{r-1}}}{N}, \f{\lambda_{n_1+ \cdots + n_{r-1}+2}}{N},\dots,\f{\lambda_N}{N}}\end{array} \Bigg\vert\, - \zeta \right)     \no \\
&  \hspace*{-1.5cm} = \f{\zeta^{\lambda_{n_1+ \cdots + n_{r-1}+1}/N}}{\prod_{j'=1}^N \Gamma\Big(1+\f{\lambda_{n_1+ \cdots + n_{r-1}+1} -\lambda_{j'}}{N}\Big)}     \no \\
&  \hspace*{-1.05cm} \times {}_0 F_{N-1}\Bigg(\!\begin{array}{c} {\displaystyle } \\
{\displaystyle 1+\f{\lambda_{n_1+ \cdots + n_{r-1}+1}-\lambda_2}{N},\dots,1+\f{\lambda_{n_1+ \cdots + n_{r-1}+1}-\lambda_N}{N}}\end{array} \Bigg\vert\, \zeta \Bigg),  \no \\
& \hspace*{-2cm} \eta_{n_1+n_2+ \cdots +n_{r-1}+2}(\zeta)       \no \\ 
& \hspace*{-1.5cm} = G_{0,N}^{2,0}\Bigg(\!\begin{array}{c} {\displaystyle } \\
{\displaystyle \f{\lambda_{n_1+ \cdots + n_{r-1}+1}}{N}, \f{\lambda_{n_1+ \cdots + n_{r-1}+2}}{N}; 
\f{\lambda_1}{N}, \dots, \f{\lambda_{n_1+ \cdots + n_{r-1}}}{N},}
\end{array}     \no \\ 
& \hspace*{4.8cm} \begin{array}{c} {\displaystyle } \\ 
{\displaystyle \f{\lambda_{n_1+ \cdots + n_{r-1}+3}}{N},\dots,\f{b_N}{N}}\end{array} \Bigg\vert\, \zeta \Bigg),     \no \\
& \hspace*{-6mm} \vdots    \no \\ 
& \hspace*{-2cm} \eta_{n_1+n_2+ \cdots +n_{r-1}+n_r}(\zeta) \equiv y_N(\zeta)       \no \\ 
& \hspace*{-1.5cm} = G_{0,N}^{n_r,0}\Bigg(\!\begin{array}{c} {\displaystyle } \\
{\displaystyle \f{\lambda_{n_1+ \cdots + n_{r-1}+1}}{N}, \dots, \f{\lambda_{n_1+ \cdots + n_{r-1}+n_r}}{N} 
\equiv \f{\lambda_N}{N};} 
\end{array}     \no \\ 
& \hspace*{3.7cm} \begin{array}{c} {\displaystyle } \\ 
{\displaystyle \f{\lambda_1}{N}, \dots, \f{\lambda_{n_1+ \cdots + n_{r-1}}}{N}}\end{array} \Bigg\vert\, (-1)^{n_r} \zeta \Bigg);  \quad 
\zeta \in \bbC \backslash \{0\}.    \no 
\end{align}

In addition, the replacement $\zeta \longrightarrow \mu (z/N)^N$ in \eqref{4.17a} yields a fundamental system of solutions $y_j(\mu,\dott)$, $1 \leq j \leq N$, of the generalized Euler eigenvalue problem \eqref{4.2}. 
\end{theorem}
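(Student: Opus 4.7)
The plan is to reduce Theorem \ref{t4.3} to a direct application of Theorem \ref{t3.1}, followed by transferring the result back to \eqref{4.2} via the change of variables recorded in Theorem \ref{t4.1}. First I would observe that the ODE \eqref{4.17} is literally \eqref{3.9} with $q$ replaced by $N$ and $b_j$ replaced by $\lambda_j/N$ for $1 \leq j \leq N$. Under this identification, the congruence-class decomposition \eqref{4.12}--\eqref{4.15} is precisely the specialization of \eqref{3.4}--\eqref{3.7}, and the ordering condition \eqref{4.14} corresponds verbatim to \eqref{3.6}. Consequently, Theorem \ref{t3.1} applies without modification, and substituting $b_j \mapsto \lambda_j/N$ in the fundamental system displayed there produces exactly the list \eqref{4.17a}. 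In particular, the representations of $\eta_1$, $\eta_{n_1+1}$, $\eta_{n_1+n_2+\cdots+n_{r-1}+1}$ in terms of ${}_0F_{N-1}$-functions follow from the $m=1$ specialization \eqref{2.13} (equivalently, \eqref{2.16}) evaluated at the appropriate $b_j$, noting that for $G_{0,N}^{1,0}$ the arguments $a_k$ are absent.

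For the linear independence part, one simply inherits the asymptotic argument from Section \ref{s3}: solutions within a single congruence class are separated either by distinct leading powers $\zeta^{\lambda_\ell/N}$ (when $\Re(\lambda_\ell)$ strictly decreases) or by distinct powers of $\ln(\zeta)$ (when $\Re(\lambda_\ell)$ is constant on a sub-block, which forces double or higher-order poles in the Mellin integrand at $\omega = \lambda_j/N + \bbN_0$). Solutions drawn from different congruence classes are automatically independent because their leading exponents $\lambda_\ell/N$ differ by non-integers, so no cancellation between the two asymptotic families can occur. Since this analysis is purely formal in the $b$-parameters, it transfers to the $\lambda_j/N$ setting without alteration.

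For the second assertion, concerning \eqref{4.2}, I would appeal to Theorem \ref{t4.1}: the transformation $\zeta = \mu(z/N)^N$, $\eta(\zeta) = y(\mu,z)$, converts \eqref{4.2} into \eqref{4.4}, which is identical to \eqref{4.17}. On any simply connected subset of $\bbC \backslash \{0\}$ (after choosing a branch of the $N$th root), this change of variables is a local biholomorphism and hence sets up a bijection between the solution spaces of \eqref{4.17} and \eqref{4.2} that preserves linear independence. Therefore setting $y_j(\mu,z) := \eta_j\bigl(\mu(z/N)^N\bigr)$, $1 \leq j \leq N$, yields the claimed fundamental system of \eqref{4.2}. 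The only real obstacle in the whole argument is bookkeeping: one must carefully match the indices of the congruence-class decomposition \eqref{4.12}--\eqref{4.15} with those of \eqref{3.4}--\eqref{3.7} so as to invoke Theorem \ref{t3.1} without any relabelling; beyond this, no new computation is required.
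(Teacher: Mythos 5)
Your proposal is correct and follows exactly the route the paper takes: Theorem \ref{t4.3} is obtained by specializing Theorem \ref{t3.1} with $q=N$ and $b_j=\lambda_j/N$, and the transfer back to \eqref{4.2} via $\zeta=\mu(z/N)^N$ is exactly the mechanism of Theorem \ref{t4.1}. The paper in fact offers no further argument beyond this reduction, so your additional remarks on linear independence and the change of variables only make explicit what the paper leaves implicit.
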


\begin{example} \lb{e4.4} Let $N=2$, $\lambda_j \in \bbC$, $j=1,2$, then 
\begin{equation}
\tau_2(\lambda_1,\lambda_2) = z^{-2} \bigg[z\f{d}{dz} - \lambda_1\bigg]\bigg[z\f{d}{dz} - \lambda_2\bigg], \quad z \in \bbC\backslash\{0\},     \lb{4.18} 
\end{equation}
and the solutions of 
\begin{equation}
\tau_2(\lambda_1,\lambda_2) y(\mu,z)= \mu y(\mu,z), \quad \mu \in \bbC, \; z \in \bbC\backslash\{0\},   \lb{4.19} 
\end{equation}
are given as follows: \\
Case $(i)$: If $[(\lambda_1-\lambda_2)/2] \in \bbC \backslash \bbZ$, then 
\begin{align}
\begin{split} 
y_1(\mu,z) &= G_{0,2}^{1,0}\left(\!\begin{array}{c} {\displaystyle } \\
{\displaystyle \f{\lambda_1}{2}; \f{\lambda_2}{2}}\end{array} \Bigg\vert -\mu z^2/4 \right)      \\
&= \f{\big(\mu^{1/2} z/2)^{\lambda_1}}{\Gamma\Big(1 + \f{\lambda_1-\lambda_2}{2}\Big)} 
{}_0 F_1 \left(\!\begin{array}{c} {\displaystyle } \\
{\displaystyle 1 + \f{\lambda_1 - \lambda_2}{2}}\end{array} \Bigg\vert \, \mu z^2/4 \right)        \\
&= \big(\mu^{1/2} z/2\big)^{(\lambda_1+\lambda_2)/2} I_{(\lambda_1-\lambda)/2}\big(\mu^{1/2}z\big),   \lb{4.20} \\
y_2(\mu,z) &= G_{0,2}^{1,0}\left(\!\begin{array}{c} {\displaystyle } \\
{\displaystyle \f{\lambda_2}{2}; \f{\lambda_1}{2}}\end{array} \Bigg\vert -\mu z^2/4 \right)      \\
&= \f{\big(\mu^{1/2} z/2)^{\lambda_1}}{\Gamma\Big(1 + \f{\lambda_2-\lambda_1}{2}\Big)} 
{}_0 F_1 \left(\!\begin{array}{c} {\displaystyle } \\
{\displaystyle 1 + \f{\lambda_2 - \lambda_1}{2}}\end{array} \Bigg\vert \, \mu z^2/4 \right)        \\
&= \big(\mu^{1/2} z/2\big)^{(\lambda_1+\lambda_2)/2} I_{(\lambda_2-\lambda_1)/2}\big(\mu^{1/2}z\big); \quad z \in \bbC\backslash\{0\}. 
\end{split}
\end{align}
Case $(ii)$: If $[(\lambda_1-\lambda_2)/2] \in \bbZ$, then 
\begin{align}
\begin{split} 
y_1(\mu,z) &= G_{0,2}^{1,0}\left(\!\begin{array}{c} {\displaystyle } \\
{\displaystyle \f{\lambda_1}{2}; \f{\lambda_2}{2}}\end{array} \Bigg\vert -\mu z^2/4 \right)      \\
&= \f{\big(\mu^{1/2} z/2)^{\lambda_1}}{\Gamma\Big(1 + \f{\lambda_1-\lambda_2}{2}\Big)} 
{}_0 F_1 \left(\!\begin{array}{c} {\displaystyle } \\
{\displaystyle 1 + \f{\lambda_1 - \lambda_2}{2}}\end{array} \Bigg\vert \, \mu z^2/4 \right)        \\
&= \big(\mu^{1/2} z/2\big)^{(\lambda_1+\lambda_2)/2} I_{(\lambda_1-\lambda)/2}\big(\mu^{1/2}z\big),   \lb{4.21} \\
y_2(\mu,z) &= G_{0,2}^{2,0}\left(\!\begin{array}{c} {\displaystyle } \\
{\displaystyle \f{\lambda_2}{2}, \f{\lambda_1}{2}}\end{array} \Bigg\vert -\mu z^2/4 \right)      \\
&= 2 \big(\mu^{1/2} z/2\big)^{(\lambda_1+\lambda_2)/2} K_{(\lambda_1-\lambda_2)/2}\big(\mu^{1/2}z\big); \quad z \in \bbC\backslash\{0\}. 
\end{split}
\end{align}
Here we used \cite[No.~9.2.2]{Lu75} in case $(i)$ and \cite[No.~8.4.8]{Lu75} in case $(ii)$.
\end{example}

\section{Some Remarks on the Nonhomogeneous Euler Differential Equation} \lb{s5}

In our final section we add some facts in connection with the inhomogeneous $N$th-order Euler differential equations, that is, we consider the general solution of the nonhomogeneous $N$th-order Euler differential equation.
 
\begin{proposition} \lb{p5.1} 
Let $N \in \bbN$, $\lambda_j \in \bbC$, $1 \leq j \leq N$, and $f \in C(\bbC)$. Then the general solution of the nonhomogeneous $N$th-order Euler differential equation
\begin{equation}
\tau_N(\lambda_1,\dots,\lambda_N) y(z) = z^{-N} f(z), \quad \lambda_j \in \bbC, \; 
z \in \bbC\backslash\{0\},     \lb{5.1}
\end{equation}
is of the following form\footnote{For simplicity of notation in \eqref{5.3} we incorporated the overall factor $z^{-N}$ on the left-hand side of \eqref{5.1} also into its nonhomogeneous term on the right-hand side.}: Grouping  $\{\lambda_1,\dots,\lambda_N\}$ into pairwise distinct elements counting multiplicity, 
\begin{align}
\begin{split} 
&\{\lambda_1,\dots,\lambda_N\} = \bigcup_{k = 1}^{K} 
\{\underbrace{\lambda_{j_k},\dots,\lambda_{j_k}}_{m_{j_k} \, times}\}, \quad 
\lambda_{j_k} \neq \lambda_{j_{k'}}, \; 1 \leq k, k' \leq K, \; k \neq k', \; 1 \leq K \leq N,     \lb{5.2} \\ 
& \sum_{k=1}^K m_{j_k} = N,
\end{split}
\end{align}
the general solution of \eqref{5.1} is given by
\begin{align}
y(z) &= \sum_{k=1}^K \sum_{m=0}^{m_{j_k}-1} \big\{C_{k,m} z^{\lambda_{j_k}} [\ln(z)]^m\big\}  
+ \int_0^1 ds_1 \cdots \int_0^1 ds_N \, f\Bigg(z \prod_{j=1}^N s_j\Bigg) 
\prod_{\ell=1}^N s_{\ell}^{-\lambda_{\ell} - 1},     \no \\
& \hspace*{3cm} C_{k,m} \in \bbC, \; 0 \leq m \leq m_{j_k} - 1, \, 1 \leq k \leq K, \; 
z \in \bbC\backslash\{0\}.  \lb{5.3} 
\end{align}
\end{proposition}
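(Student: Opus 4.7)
The plan is the standard one for linear inhomogeneous ODEs: exhibit a fundamental system of solutions for the associated homogeneous equation and then add any one particular solution. Since the factors $[z\,d/dz - \lambda_j]$ commute, the original equation \eqref{5.1} is equivalent (after cancelling the common factor $z^{-N}$) to
\[
\prod_{j=1}^N \bigg[z\f{d}{dz} - \lambda_j\bigg] y(z) = f(z), \quad z \in \bbC\backslash\{0\}.
\]

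For the homogeneous part, I would first verify by direct computation the identity $[z\,d/dz - \lambda]\big(z^{\lambda}[\ln(z)]^k\big) = k z^{\lambda} [\ln(z)]^{k-1}$, which implies $[z\,d/dz - \lambda]^m \big(z^{\lambda}[\ln(z)]^{k}\big) = 0$ for $0 \leq k \leq m-1$ and simultaneously shows that these $m$ functions are linearly independent (the operator lowers the logarithmic degree by exactly one). Since $\prod_k [z\,d/dz - \lambda_{j_k}]^{m_{j_k}}$ has order $N = \sum_k m_{j_k}$, its kernel has dimension at most $N$; the distinctness of the $\lambda_{j_k}$ together with the fact that functions $z^{\lambda_{j_k}}[\ln(z)]^m$ belonging to different $k$ are linearly independent (by comparison of leading asymptotics, e.g.\ along rays $z = r e^{i\theta}$, $r \to \infty$) then produces exactly $N$ linearly independent solutions, hence a fundamental system.

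For a particular solution, the central step is to establish the single-factor identity
\[
\bigg[z\f{d}{dz} - \lambda\bigg] \int_0^1 ds \, g(zs) \, s^{-\lambda - 1} = g(z), \quad \lambda \in \bbC,
\]
for suitable $g$. This follows from the substitution $s = t/z$, which rewrites the left-hand integral as $z^{\lambda} \int_0^z g(t) \, t^{-\lambda - 1} \, dt$, after which differentiation is elementary and yields the claim. Iterating this identity $N$ times---with commutativity of the factors $[z\,d/dz - \lambda_j]$ guaranteeing that the order of iteration (and hence the labelling of the dummy variables $s_\ell$) is immaterial---produces precisely the iterated-integral expression in \eqref{5.3} as a particular solution of the rewritten equation, and therefore of \eqref{5.1}.

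The principal technical obstacle I anticipate is the convergence of the iterated integrals near $s_\ell = 0$: the factor $s_\ell^{-\lambda_\ell - 1}$ is not locally integrable at the origin for generic $\lambda_\ell$, so the formula \eqref{5.3} must be interpreted either formally or under a suitable vanishing or decay hypothesis on $f$ at the origin (e.g., $f$ analytic at $0$ with order of vanishing exceeding $\max_\ell \Re(\lambda_\ell)$), or under the blanket sign condition $\Re(\lambda_\ell) < 0$ for all $\ell$. The cleanest route is to first prove the formula under such a convergence assumption (or with $0$ replaced by a nonzero lower endpoint and a suitable regularised kernel) and then to remove the hypothesis by analytic continuation in the parameters $\lambda_1,\dots,\lambda_N$.
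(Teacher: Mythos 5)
Your proposal follows essentially the same route as the paper: the homogeneous solutions come from the identity $\big[z\frac{d}{dz}-\lambda\big]\big(z^{\lambda}[\ln(z)]^{k}\big)=k\,z^{\lambda}[\ln(z)]^{k-1}$ (equivalent to the paper's device of repeatedly differentiating $\tau_N(\lambda_1,\dots,\lambda_N)\big(z^{\lambda}\big)=P_N(\lambda)z^{\lambda+N}$ with respect to $\lambda$), and the particular solution is obtained by exactly the single-factor substitution $s=t/z$ and subsequent iteration that the paper carries out. Your caveat about convergence of the iterated integral at $s_{\ell}=0$ when $\Re(\lambda_{\ell})\geq 0$ is legitimate and is in fact not addressed in the paper either, so it reflects a gap in the statement's hypotheses rather than in your argument.
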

\begin{proof}
That 
\begin{align}
\begin{split} 
& \sum_{k=1}^K \sum_{m=0}^{m_{j_k}-1} \big\{C_{k,m} z^{\lambda_{j_k}} [\ln(z)]^m\big\},     \\
& \, C_{k,m} \in \bbC, \; 0 \leq m \leq m_{j_k} - 1, \, 1 \leq k \leq K, \; z \in \bbC\backslash\{0\},     \lb{5.4} 
\end{split}
\end{align} 
is the general solution of the homogeneous $N$th-order Euler differential equation is well-known; it easily follows from taking repeated $\lambda$-derivatives of the underlying indicial (or characteristic) polynomial
\begin{equation}
P_N(\lambda) = \prod_{j=1}^N (\lambda - \lambda_j),    \lb{5.5} 
\end{equation}
noting that ($z \in\bbC\backslash\{0\}$)
\begin{align}
& \tau_N(\lambda_1,\dots,\lambda_N) \big(z^{\lambda}\big) = P_N(\lambda) \big(z^{\lambda + N}\big),    \no \\
& \tau_N(\lambda_1,\dots,\lambda_N) \big((\partial/\partial \lambda) z^{\lambda}\big) 
= \tau_N(\lambda_1,\dots,\lambda_N) \big(z^{\lambda} \ln(z)\big)    \no \\
& \quad = (\partial/\partial \lambda) \big[\tau_N(\lambda_1,\dots,\lambda_N) \big(z^{\lambda}\big)\big]
= [P_N'(\lambda) + P_N(\lambda) \ln(z)] z^{\lambda + N},     \lb{5.6} \\
& \quad \;\; \vdots   \no \\
& \quad \text{etc.}       \no 
\end{align}
Hence, it remains to show that 
\begin{equation}
\int_0^1 ds_1 \cdots \int_0^1 ds_N \, f\Bigg(z \prod_{j=1}^N s_j\Bigg) 
\prod_{\ell=1}^N s_{\ell}^{-\lambda_{\ell} - 1}, \quad z \in \bbC\backslash\{0\},      \lb{5.7} 
\end{equation}
is a particular solution of \eqref{5.1}. This in turn follows from iterating the elementary computation,
\begin{align}
& \prod_{j=1}^N \bigg[z\f{d}{dz} - \lambda_j\bigg] \Bigg[\int_0^1 ds_1 \cdots \int_0^1 ds_N \, f\Bigg(z \prod_{j=1}^N s_j\Bigg) \prod_{\ell=1}^N s_{\ell}^{-\lambda_{\ell} - 1}\Bigg]    \no \\
& \quad = \prod_{j=1}^N \bigg[z\f{d}{dz} - \lambda_j\bigg] \Bigg[z^{\lambda_1} \int_0^z d\zeta_1 \int_0^1 ds_2 
\cdots \int_0^1 ds_N \, f\Bigg(\zeta_1 \prod_{j=2}^N s_j\Bigg)    \no \\
& \hspace*{8.5cm}  \times \zeta_1^{- \lambda_1 -1} \prod_{\ell=2}^N s_{\ell}^{-\lambda_{\ell} - 1}\Bigg]    \no \\
& \quad = \prod_{j=2}^N \bigg[z\f{d}{dz} - \lambda_j\bigg] \Bigg[(\lambda_1 - \lambda_1) z^{\lambda_1} \int_0^z d\zeta_1 \int_0^1 ds_2 \cdots \int_0^1 ds_N \, f\Bigg(\zeta_1 \prod_{j=2}^N s_j\Bigg)      \no \\
& \hspace*{10cm}  \times \zeta_1^{- \lambda_1 -1} \prod_{\ell=2}^N s_{\ell}^{-\lambda_{\ell} - 1}    \no \\
& \hspace*{3.1cm} + z^{1+\lambda_1} \int_0^1 ds_2 \cdots \int_0^1 ds_N \, 
f\Bigg(z \prod_{j=2}^N s_j\Bigg) z^{-\lambda_1-1} \prod_{\ell=2}^N s_{\ell}^{-\lambda_{\ell} - 1}\Bigg]  \no \\
& \quad = \prod_{j=2}^N \bigg[z\f{d}{dz} - \lambda_j\bigg] \Bigg[\int_0^1 ds_2 \cdots \int_0^1 ds_N \, 
f\Bigg(z \prod_{j=2}^N s_j\Bigg) \prod_{\ell=2}^N s_{\ell}^{-\lambda_{\ell} - 1}\Bigg]  \no \\
& \quad = \prod_{j=2}^N \bigg[z\f{d}{dz} - \lambda_j\bigg] \Bigg[z^{\lambda_2} \int_0^z d\zeta_2 \int_0^1 ds_3 \cdots 
\int_0^1 ds_N \, f\Bigg(\zeta_2 \prod_{j=3}^N s_j\Bigg)     \no \\ 
& \hspace*{7.6cm} \times \zeta_2^{- \lambda_2 -1} \prod_{\ell=3}^N s_{\ell}^{-\lambda_{\ell} - 1}\Bigg]     \no \\
& \quad \;\; \vdots    \no \\
& \quad = \bigg[z \f{d}{dz} - \lambda_N\bigg] z^{\lambda_N} \int_0^z d\zeta_N \, f(\zeta_N) \zeta_N^{-\lambda_N - 1}  \no \\
& \quad = (\lambda_N - \lambda_N) z^{\lambda_N} \int_0^z d\zeta_N \, f(\zeta_N) \zeta_N^{-\lambda_N -1}
+ z^{1+\lambda_N} f(z) z^{-\lambda_N - 1}    \no \\ 
& \quad = f(z), \quad z \in \bbC\backslash\{0\}. 
\end{align}
Here the path from $0$ to $z$ in the $\zeta_j$-plane, $1\leq j \leq N$, can be chosen to be a straight line segment. 
\end{proof}

We note that the special case $N=2$ in Proposition \ref{p5.1} is mentioned in \cite[p.~202]{In56}. 
	
\medskip

\noindent {\bf Acknowledgments.} We are indebted to Mark Ashbaugh and Andrei Martinez-Finkelshtein for very helpful hints to the literature in connection with Meijer's $G$-function.


\end{document}